\numberwithin{table}{section}
\newcolumntype{C}[1]{>{\centering\arraybackslash}p{#1}}
\newtheorem{prop}{Proposition}[section]
\newtheorem{theorem}[prop]{Theorem}
\newtheorem{lemma}[prop]{Lemma}\newtheorem{theoremA}{Theorem}
\theoremstyle{remark}
\newtheorem{rem}[prop]{Remark}
\theoremstyle{definition}
\newtheorem{condition}{Condition}
\DeclareMathOperator{\Aut}{Aut}
\DeclareMathOperator{\ord}{ord}
\DeclareMathOperator{\Irr}{Irr}
\DeclareMathOperator{\Inf}{Inf}
\DeclareMathOperator{\Gal}{Gal}
\DeclareMathOperator{\ab}{ab}
\DeclareMathOperator{\Ind}{Ind}
\DeclareMathOperator{\Syl}{Syl}
\newcommand{\G}{\mathbf{G}}
\newcommand{\GF}{\mathbf{G}^F}
\newcommand{\GFd}{{\mathbf{G}^*}^{F^*}}
\begin{document}
\title{On the inductive McKay--Navarro condition for $\mathsf{B}_2(2^f)$ and $\mathsf{G}_2(3^f)$}
\author{Birte Johansson}
\
\address{FB Mathematik, RPTU Kaiserslautern--Landau.}
\subjclass[2010]{20C15, 20C33}
\keywords{local-global conjectures, McKay conjecture, McKay--Navarro conjecture, Galois--McKay conjecture, finite groups of Lie type}

\begin{abstract}
We verify the inductive McKay--Navarro condition for the groups $\mathsf{B}_2(2^f)$ and $\mathsf{G}_2(3^f)$ and all primes if $f$ is odd. Further, we show that the equivariance part of the inductive condition holds for all integers $f$. 
\end{abstract}

\maketitle

\section{Introduction}
The McKay--Navarro conjecture is one of the so-called local-global conjectures that relate the representation theory of a finite group with that of some of its subgroups. In order to state it, we write $\Irr_{\ell'}(G)$ to denote the set of irreducible characters of a finite group $G$ with degree not divisible by a prime $\ell$. The McKay conjecture claims that $\Irr_{\ell'}(G)$ is in bijection with $\Irr_{\ell'}(N_G(R))$ where $R$ is a Sylow $\ell$-subgroup of $G$. 
Navarro refined this conjecture in \cite{navarro2004mckayrefinement} and claimed that this bijection can be chosen equivariant under the action of certain Galois automorphisms on the respective characters. This is known as the McKay--Navarro or Galois--McKay conjecture. 

In \cite{navarro2019reduction}, Navarro, Späth, and Vallejo proved a reduction theorem for the McKay--Navarro conjecture, yielding an inductive condition that should be verified for all finite simple groups. So far, this has been done for groups of Lie type in their defining characteristic in \cite{ruhstorfer2017navarro} and \cite{johansson2020} as well as for the Suzuki and Ree groups and all primes in \cite{johansson2021}. Ruhstorfer and Schaeffer Fry further showed in \cite{ruhstorferSF2022} that the inductive condition holds for all finite simple groups and the prime $\ell=2$ and thereby proved that the conjecture itself is true for $\ell=2$.

Parts of the inductive condition are concerned with the extensions of characters to automorphism groups and the action of group and Galois automorphisms on these extensions. The character extensions for the groups that have been considered so far are mostly invariant under this action, especially for groups of Lie type without diagonal automorphisms. However, we noticed that some characters of the group $3.\mathsf{G}_2(3)$ do not allow such an invariant extension for $\ell=3$. While one of these characters arises from the exceptional Schur multiplier of $\mathsf{G}_2(3)$, another one is a unipotent character that arises for all finite simple groups of type $\mathsf{G}_2$ over a field of characteristic $3$.  
This was the motivation to study the inductive McKay--Navarro condition for these groups. We show the following theorem.

\begin{theoremA} \label{theoremA}
The equivariance part of the inductive McKay--Navarro condition holds for the groups $\mathsf{B}_2(2^f)$, $\mathsf{G}_2(3^f)$ and any prime $\ell$ where $f \geq 1$. If $f$ is odd, the full inductive condition is satisfied.
\end{theoremA}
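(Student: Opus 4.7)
The plan is to divide the verification of the inductive McKay--Navarro condition into two cases according to whether the prime $\ell$ equals the defining characteristic $p$ of $G \in \{\mathsf{B}_2(2^f), \mathsf{G}_2(3^f)\}$. In either case, $G$ has trivial Schur multiplier apart from the exceptional cover $3 . \mathsf{G}_2(3)$, no nontrivial diagonal automorphisms, and an outer automorphism group generated by a field automorphism together with, in the defining characteristic, an exceptional graph automorphism arising from the special isogeny. This keeps the action of $\Aut(G)$ on $\Irr(G)$ tractable, so the equivariance of a candidate bijection can be checked by tracking explicit character data.

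For $\ell \neq p$, I would first describe a Sylow $\ell$-subgroup $R$ and its normalizer $N_G(R)$. Because $G$ has small rank, $R$ is cyclic or closely tied to the structure of a maximal torus, and $\Irr_{\ell'}(N_G(R))$ admits an explicit description via Clifford theory. On the other side, $\Irr_{\ell'}(G)$ is accessible through the generic character tables and Lusztig's Jordan decomposition. Constructing a canonical bijection $\Omega : \Irr_{\ell'}(G) \to \Irr_{\ell'}(N_G(R))$ and verifying its equivariance under $\Aut(G)_R$ and the relevant Galois group then reduces to a finite case analysis, and the extension part of the inductive condition follows from the small size of the relevant relative Weyl groups.

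For $\ell = p$ the normalizer $N_G(R)$ is a Borel subgroup $B$, and the bijection built in prior work on the defining-characteristic case (\cite{johansson2020}) supplies a natural starting point via Harish--Chandra restriction. The genuinely new difficulty, highlighted in the introduction, is that a unipotent character of $\mathsf{G}_2(3^f)$ and a character of $3.\mathsf{G}_2(3)$ arising from the exceptional Schur multiplier do not admit $\Aut(G)$-invariant extensions to their inertia groups. My approach would be to work explicitly in the central extension, identify the inertia groups in $G \rtimes \Aut(G)$, and choose extensions whose Galois behaviour matches what the inductive condition requires; the refined formulation permits such a trade-off as long as Galois equivariance is preserved, even when a fully $\Aut(G)$-invariant extension is unavailable.

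The main obstacle is the full inductive condition when $f$ is even. The restriction to $f$ odd reflects the fact that exactly in this range the exceptional graph automorphism and the field automorphism generate an outer automorphism group in which the problematic characters have stabilizers admitting equivariant extensions; for $f$ even, the intermediate field $\mathbb{F}_{p^{f/2}}$ contributes an extra field automorphism and the compatibility condition for the problematic characters fails. For the equivariance part alone, this obstruction can be absorbed into a suitable choice of bijection and a careful choice of Sylow normalizer, which is why that weaker statement extends to all $f \geq 1$.
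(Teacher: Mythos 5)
Your proposal misplaces the new mathematical content. You treat the defining-characteristic case $\ell = p$ as ``the genuinely new difficulty,'' but the paper explicitly cites this as already settled: $\ell = 3$ for $\mathsf{G}_2(3^f)$ and $\ell = 2$ for $\mathsf{B}_2(2^f)$ are handled in \cite{johansson2020}, $\ell = 2$ in general in \cite{ruhstorferSF2022} and \cite{johansson2021}, and the lone exceptional cover $3.\mathsf{G}_2(3)$ is verified by a direct computer check. All of the new work is for odd primes $\ell$ coprime to $p$ (in fact $\ell \geq 5$ for $\mathsf{G}_2$, $\ell \geq 3$ for $\mathsf{B}_2$), using Sylow $d$-tori, Cabanes--Sp\"ath parametrizations of $\Irr_{\ell'}(G)$ and $\Irr_{\ell'}(N)$ by pairs $(s,\eta)$, and Sylow twists. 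Your emphasis on the problematic unipotent character of $\mathsf{G}_2(3)$ is a red herring: the paper remarks that, contrary to this motivation, all relevant extensions do turn out to be $\mathcal{H}$-invariant when $f$ is odd.

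Your explanation of the restriction to $f$ odd is also incorrect, and consequently so is your account of why the equivariance part survives for all $f$. The equivariance part (Proposition \ref{propeq}, Lemma \ref{lemmaeqB2}) concerns only the parametrizing bijection, not character extensions, so there is no obstruction to ``absorb''; it holds for every $f$ because the Cabanes--Sp\"ath maps are $D$-equivariant and become $\mathcal{H}$-equivariant via the parts of Lemma \ref{lemmanumbertheory} that do not require $f$ odd. The genuine reasons for restricting to odd $f$ in the extension condition are two specific facts you do not mention: (i) the congruences in Lemma \ref{lemmanumbertheory}(a),(b),(d), which guarantee that $\mathcal{H}$ fixes $\sqrt{3}$, $\sqrt{-3}$, $\sqrt{2}$, only hold when $f$ is odd, and these square roots are exactly the character values appearing in the extensions of the degree-two characters of $W_1 \cong D_{12}$ (resp.\ $D_8$) to $W_1 \rtimes \langle\varphi\rangle$; and (ii) when $f$ is odd, $\gamma^f$ has order $2$ and acts as the graph automorphism, so Brunat's explicitly computed extensions to $G\rtimes\langle\gamma^f\rangle$ in \cite{brunat2007extensiong2graph}, \cite{brunat2006extSuz} apply directly, which they do not for $f$ even. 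Finally, the assertion that the extension condition ``follows from the small size of the relative Weyl groups'' glosses over the bulk of the work: one must compute the character tables of $W_1 \rtimes \langle\varphi\rangle$ and $W_1\rtimes\langle\varphi_2\rangle$ (the latter a twisted automorphism differing by an element of $V$), determine the field of values, and for the global characters combine the rationality argument of Navarro--Tiep, Gelfand--Graev extensions, disconnected-group Deligne--Lusztig theory, and the explicit multiplicity-free decompositions of $R_{\mathbf{S}_3}^{\G}(1)$ and $R_{\mathbf{S}_6}^{\G}(1)$.
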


\section{Preliminaries}
\subsection{Inductive McKay--Navarro condition} Let $G$ be a finite group and $\ell$ a prime dividing $|G|$. The group $\mathcal{G}:=\Gal(\mathbb{Q}^{\ab}/\mathbb{Q})$ acts on the set $\Irr(G)$ of ordinary irreducible characters of $G$ by first evaluating the character and then applying the Galois automorphisms to the character values. We are interested in the subgroup $\mathcal{H} \subseteq \mathcal{G}$ consisting of those Galois automorphisms that map every root of unity $\zeta$ with $\ord(\zeta)$ prime to $\ell$ to $\zeta^{\ell^k}$ for some integer $k$. 

We denote the automorphism group of $G$ by $\Aut(G)$ and write $\Aut(G)_H$ for the setwise stabilizer of $H \leq G$ in $\Aut(G)$. An automorphisms $a$ of $G$ acts on a character $\psi$ of $G$ by $\psi^a(x)=\psi(x^{a^{-1}})$ for all $x \in G$. If $a$ is an inner automorphism, we also use the group element itself to denote conjugation with $g \in G$. We write $A_\psi$ to denote the elements of $A \leq \Aut(G)$ stabilizing the character $\psi$ of $G$.

We consider the following simplified condition.
\begin{condition} \label{cond}
Let $S$ be a non-abelian simple group, $G$ its universal covering group, $\ell$ a prime, and $R \in \Syl_\ell(G)$. 
\begin{enumerate}[(1)]
\item \textit{(Equivariance condition)} There exists an $\Aut(G)_R$-stable subgroup $N_G(R) \subseteq N \subsetneq G$ and an $\Aut(G)_R \times \mathcal{H}$-equivariant bijection $\Omega: {\Irr}_{\ell'}(G) \rightarrow {\Irr}_{\ell'}(N)$ preserving central characters.
\item \textit{(Extension condition)} For all $\psi \in {\Irr}_{\ell'}(G)$, there are $(\Aut(G)_R \times \mathcal{H})_\psi$-invariant extensions of $\psi$ and $\Omega(\psi)$ to $G \rtimes (\Aut(G)_R)_\psi$ and $N \rtimes (\Aut(G)_R)_\psi$, respectively.
\end{enumerate}
\end{condition}
This condition implies the inductive McKay--Navarro condition from \cite[Definition 3.5]{navarro2019reduction} for groups with cyclic outer automorphism groups since, in this case, we do not have to keep track of the scalars corresponding to the extensions on $C_{G \rtimes (\Aut(G)_R)_\psi}(N)$, see \cite[Lemma 4.6]{johanssondiss}.

\subsection{Action of $\mathcal{H}$ on roots of unity}
Before we start to work on the actual condition, we give an easy number-theoretical lemma where we collect some statements that we need in the following.
\begin{lemma}\label{lemmanumbertheory} Let $\ell$ be an odd prime and $f$ a positive integer.
\begin{enumerate}[(a)]
\item If $\ell \mid 3^f-1$ and $f$ is odd, then $\ell \equiv 1,11 \mod 12$. In particular, $\mathcal{H}$ then fixes $\sqrt{3}$.
\item If $\ell \mid 3^f+1$ and $f$ is odd, then $\ell \equiv 1,7  \mod 12$. In particular, $\mathcal{H}$ then fixes $\sqrt{-3}$.
\item If $\ell \mid q^2\pm q+1$ for $q:=3^f$, then $\ell \equiv 1  \mod 3$. In particular, $\mathcal{H}$ then fixes $\sqrt{-3}$.
\item If $\ell \mid 2^f-1$ and $f$ is odd, then $\ell \equiv 1,7 \mod 8$. In particular, $\mathcal{H}$ then fixes $\sqrt{2}$.
\item If $\ell \mid 2^{2f}+1$, then $\ell \equiv 1 \mod 4$.
\end{enumerate}
\end{lemma}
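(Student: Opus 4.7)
The plan is to handle each of the five parts by elementary manipulations in $(\mathbb{Z}/\ell\mathbb{Z})^\times$, together with quadratic reciprocity where needed, and then to deduce the Galois-theoretic addenda from the standard expressions $\sqrt{3} = \zeta_{12}+\zeta_{12}^{-1}$, $\sqrt{-3} = 2\zeta_3+1$, and $\sqrt{2} = \zeta_8+\zeta_8^{-1}$.

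For (a), the hypothesis implies $\ell \neq 3$ and that the order of $3$ in $(\mathbb{Z}/\ell\mathbb{Z})^\times$ divides the odd integer $f$, hence is odd. Writing $f = 2k+1$, the congruence $3 \cdot (3^k)^2 \equiv 1 \pmod{\ell}$ exhibits $3$ as a square modulo $\ell$, so by quadratic reciprocity $\ell \equiv \pm 1 \pmod{12}$. Part (b) is analogous: from $3 \cdot (3^k)^2 \equiv -1 \pmod{\ell}$ one obtains $-3 \equiv (3^{-k})^2 \pmod{\ell}$, and $-3$ is a quadratic residue mod $\ell$ iff $\ell \equiv 1 \pmod 3$, which combined with $\ell$ odd gives $\ell \equiv 1, 7 \pmod{12}$. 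For (c) the factorizations $q^2+q+1 = (q^3-1)/(q-1)$ and $q^2-q+1 = (q^3+1)/(q+1)$ show that $q$ has order $3$ or $6$ modulo $\ell$ (the case $\ell = 3$ is ruled out by a direct check), so $3 \mid \ell-1$ in either case. Part (d) mirrors (a) with $2$ in place of $3$, and quadratic reciprocity gives $\ell \equiv \pm 1 \pmod 8$. Finally (e) is immediate: $-1 \equiv (2^f)^2 \pmod{\ell}$ is a square, hence $\ell \equiv 1 \pmod 4$.

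For the ``in particular'' statements, a direct computation in the relevant cyclotomic field determines the stabilizer of each square root: $\{\pm 1\} \subseteq (\mathbb{Z}/12\mathbb{Z})^\times$ fixes $\sqrt{3}$, the trivial subgroup of $(\mathbb{Z}/3\mathbb{Z})^\times$ fixes $\sqrt{-3}$, and $\{\pm 1\} \subseteq (\mathbb{Z}/8\mathbb{Z})^\times$ fixes $\sqrt{2}$. Any $\sigma \in \mathcal{H}$ acts on $\zeta_N$ (with $\gcd(N,\ell) = 1$) as $\zeta_N \mapsto \zeta_N^{\ell^k}$ for some integer $k$; since $\ell$ itself lies in the relevant stabilizer subgroup by the congruences just established, so does every power $\ell^k$, and the fixing assertion follows.

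There is no real mathematical obstacle here; the only point of caution is that the integer $k$ attached to a given $\sigma \in \mathcal{H}$ is arbitrary, so the stabilizer claim must be checked for every power $\ell^k$ modulo $N$ rather than merely for $\ell$ itself. This is automatic once $\ell$ itself lies in the stabilizer, because stabilizers are subgroups of $(\mathbb{Z}/N\mathbb{Z})^\times$.
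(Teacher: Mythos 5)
Your proposal is correct, and parts (a), (d), (e) are essentially the same as the paper's proof (Fermat plus quadratic reciprocity, then an explicit cyclotomic expression for the square root). You do take a genuinely different route in (b) and (c). In (b) the paper runs a case distinction on $\ell \bmod 4$ and applies the law for $\left(\frac{3}{\ell}\right)$ twice, then intersects the resulting congruence classes; you instead exhibit $-3$ directly as a square by rewriting $3\cdot(3^k)^2\equiv -1$ and then invoke the clean criterion $\left(\frac{-3}{\ell}\right)=1 \Leftrightarrow \ell\equiv 1\pmod 3$. In (c) the paper argues by contradiction: if $\ell\equiv 2\pmod 3$ then $\ell$ would divide $q\mp1$, which forces $\ell\mid 3q$ or $\ell\mid q-2$, impossible; you instead observe that $q$ must have multiplicative order $3$ (resp.\ $6$) modulo $\ell$ after ruling out smaller orders, so $3\mid \ell-1$. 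Your order-of-$q$ argument is more conceptually direct, though your write-up does gloss over ruling out orders $1$, $2$, $3$ in the $q^2-q+1$ case — easy to fill in but worth stating. Your closing remark that the stabilizer of each $\sqrt{\pm m}$ is a subgroup of $(\mathbb{Z}/N\mathbb{Z})^\times$, so membership of $\ell$ automatically propagates to all powers $\ell^k$, is a helpful explicit observation that the paper leaves implicit.
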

\begin{proof}
By Fermat's little theorem, we have $3^{\ell-1}=1 \mod \ell$. Further, we know by quadratic reciprocity that $3^{(\ell-1)/2} \equiv 1 \mod \ell $ if $\ell=1,11 \mod 12$ and  $3^{(\ell-1)/2} \equiv -1 \mod \ell $ if $\ell=5,7 \mod 12$.

In (a), we already know $3^{(\ell-1)/2}=1 \mod \ell$ since $f$ is odd. The second claim then follows from $$\sqrt{3}=\zeta_{12}-\zeta_{12}^5=\zeta_{12}^{11}-\zeta_{12}^7$$ for a primitive $12$-th root of unity $\zeta_{12}$.

In (b),  we have $3^f \equiv -1 \mod \ell$ and $3^{2(f, (\ell-1)/2)} \equiv 1$. Assume that $\ell \equiv 3 \mod 4$. Then $(\ell-1)/2$ is odd and we are therefore in the second case above which implies $\ell= 5,7 \mod 12$. If we have $\ell \equiv 1 \mod 4$, then we know $3^{2(f, (\ell-1)/4)} \equiv 1$ and we are therefore in the first case. This implies $\ell \equiv 1,7  \mod 12$. The second claim follows from $\sqrt{-3}=-\zeta_{3}+\zeta_{3}^2 $ for a third root of unity $\zeta_{3}$.

To prove (c), first assume that $\ell \mid q^2+q+1$. Since $(q^2+q+1)(q-1)=q^3-1$, we have $3^{(3f, \ell-1)} \equiv 1 \mod \ell$. If $ \ell \equiv 2 \mod 3$, this implies $3^{(f, \ell-1)} \equiv 1 \mod \ell$ and thereby $\ell \mid q-1$. This is not possible since it implies $\ell \mid (q^2+q+1)-(q-1)^2=3q$ and it follows $\ell \equiv 1 \mod 3$.

For $\ell \mid q^2-q+1$, we have similarly $3^{(6f, \ell-1)} \equiv 1 \mod \ell$. If $ \ell \equiv 2 \mod 3$, this implies $3^{(2f, \ell-1)} \equiv 1 \mod \ell$ and thereby $\ell \mid q+1$ or $\ell \mid q-1$. This is not possible since it implies $\ell \mid (q^2-1)-(q^2-q+1)=q-2$ which is not possible and it follows $\ell \equiv 1 \mod 3$.

Finally, (d) can be proved analogously to (a) and (e) follows directly from the second supplement to quadratic reciprocity.
\end{proof}

\subsection{Notions from Lusztig theory}
Let $\G$ be a connected reductive group defined over an algebraically closed field of positive characteristic and $F$ a Frobenius endomorphism of $\G$. If $\mathbf{L}$ is an $F$-stable Levi subgroup of $\G$, we denote the relative Weyl group of $\mathbf{L}$ in $\G$ by $\mathbf{W}_{\G}(\mathbf{L}):=N_{\G}(\mathbf{L})/\mathbf{L}$. Let $(\G^*, F^*)$ be in duality with $(\G, F)$ and $s \in \GFd$ semisimple. We write $\mathcal{E}(\GF, \mathbf{T}^F, s)$ for the set of irreducible consituents of the Deligne--Lusztig generalized character $R_{\mathbf{T}}^{\G}(s)$ where $\mathbf{T}\subseteq \G^*$ is an $F$-stable maximal torus containing $s$. 
The irreducible characters of $\GF$ are parametrized by pairs $(s, \nu)$ where $s \in \GFd$ is semisimple and $\nu$ is a unipotent character of $C_{\GFd}(s)$. If $\chi \in \Irr(\GF)$ has Jordan decomposition $(s, \nu)$, we also denote it by $\chi_{s, \nu}^{\G}$. See \cite[Chapter 2]{geckmalle2020} for details on these notions.
 
\section{Equivariance condition for type $\mathsf{G}_2$} 
In this section, we verify the equivariance part of Condition \ref{cond} for the groups $\mathsf{G}_2(3^f)$.
\subsection{Setting and Sylow twists} \label{sectiong2}

From now on, let $\G$ be a simple algebraic group of type $\mathsf{G}_2$ over an algebraically closed field of characteristic $3$. We denote the standard Frobenius map of $\G$ by $F_3$ and set $F:=F_3^f$, $q:=3^f$ for some integer $f \geq 2$. Then $G:=\GF=\mathsf{G}_2(3^f)$ is a simple group with trivial Schur multiplier. The outer automorphism group of $G$ is $D=\langle \gamma \rangle$ where $\gamma$ is the exceptional graph automorphism of $\G$. Therefore, we have $\gamma^2=F_3$ and $|D|=2f$.
Note that $\G$ has trivial center and $(\G,F)$ is self-dual. 

We consider the inductive McKay--Navarro condition for a prime $\ell$ dividing 
$$|G|=q^6 (q-1)^2(q+1)^2(q^2+q+1)(q^2-q+1)=q^6 \phi_1^2 \phi_2^2 \phi_3 \phi_6$$
where $\phi_d$ denotes the $d$-th cyclotomic polynomial evaluated at $q$. We assume that $\ell \geq 5$ since $\ell=2$ and $\ell=3$ have already been considered in \cite{johansson2021} and \cite{johansson2020}. 

Let $d$ be the order of $q$ modulo $\ell$. Then we have $\ell \mid \phi_d$. Let $\mathbf{S}_d$ be a Sylow $d$-torus of $\G$ and $N:=N_{G}(\mathbf{S}_d)$. 
Note that $\mathbf{T}_0:=\mathbf{S}_1$ is maximally split and $\mathbf{S}_d=C_{\G}(\mathbf{S}_d)$ is a maximal torus of $G$ since $d$ is a regular number for $\G$ as in \cite{springer74}. We write $\mathbf{N}:=N_{\G}(\mathbf{T}_0)$ and denote the relative Weyl groups by $W_d:=\mathbf{W}_{\G}(\mathbf{S}_d)$. Note that $W_1 \cong W_2$ is isomorphic to the dihedral group $D_{12}$ of order $12$ and $F$ acts trivially on it. Further, we have $W_3 \cong W_6 \cong C_6$, see \cite[Table 1]{kleidman1988maxsubgroups}.

By \cite[Theorem 5.19]{malle2008height0}, $N$ contains the normalizer of a Sylow $\ell$-subgroup and we can therefore consider the group $N$ as the local group in the inductive condition.

In the following, we work with so-called Sylow twists. We only introduce the notation and constructions that we need in the following and refer to \cite{spaeth2009mckayexceptional} for the details. 

Let $V:=\langle n_{\alpha}(\pm 1) \mid \alpha \in \Phi \rangle \leq \mathbf{N}$ be the extended Weyl group where $\Phi$ is the root system of $\G$ with respect to $\mathbf{T}_0$ and the $n_\alpha$ are as in \cite[Theorem 1.12.1]{gorensteinlyonssolomon1998}. We set $H:=V \cap \mathbf{T}_0$. Let $v_d \in V$ be a very good Sylow $d$-twist of $(\G,F)$ as chosen in \cite[Proof of Proposition 6.3]{spaeth2009mckayexceptional}. Note that we have $v_1=v_3^3=1$ and $v_2=v_6^3 \in Z(W_1)$. We often also write $v=v_d$ and keep in mind that $v$ depends on $d$.
Then, $v$ induces an isomorphism $G \cong \G^{vF}$ and $\mathbf{T}_0$ is a Sylow $d$-torus of $(\G, vF)$. Therefore, we can identify $\mathbf{T}_0^{vF}$ with $\mathbf{S}_d^F$ and $\mathbf{N}^{vF}$ with $N$.

\subsection{Character parametrization} \label{sectioneq}
As described in \cite{malle2008height0} and \cite[Section 4]{cabanesspaeth2013}, the characters in $\Irr_{\ell'}(\G^{vF})$ and $\Irr_{\ell'}(\mathbf{N}^{vF})$ can be parametrized by the set
$$\mathcal{M}=\{(s, \eta) \mid s \in \mathbf{T}_0^{vF} \text{ semisimple up to $\mathbf{N}^{vF}$-conjugacy, } \eta \in \Irr_{\ell'}(\mathbf{W}_{C_{\G}(s)}(\mathbf{T}_0)^{vF})\}$$
and there are $D$-equivariant bijections given by 
$$\psi_{\text{glob}}:\mathcal{M} \rightarrow \Irr_{\ell'}(\G^{vF}), \quad (s, \eta) \mapsto \chi^{\G}_{s, \mathcal{I}(\eta)},$$
$$\psi_{\text{loc}}:\mathcal{M} \rightarrow \Irr_{\ell'}(\mathbf{N}^{vF}), \quad (s, \eta) \mapsto \Ind_{(\mathbf{N}^{vF})_{\chi^{\mathbf{T}_0}_{s, 1}}}^{\mathbf{N}^{vF}}(\Lambda(\chi^{\mathbf{T}_0}_{s, 1})\eta).$$
Here, $\Lambda$ is an extension map for $\mathbf{T}_0^{vF} \unlhd \mathbf{N}^{vF}$ as in \cite[Definition 2.9]{cabanesspaeth2013}, and 
$\mathcal{I}: \Irr(\mathbf{W}_{C_{\G}(s)}(\mathbf{T}_0)^{vF}) \rightarrow \mathcal{E}(C_{\G}(s)^{vF}, \mathbf{T}_0^{vF}, 1)$ is a $D$-equivariant bijection, see \cite[Theorem 3.4]{cabanesspaeth2013}. 

Note that these parametrizations do not only exist for $\mathbf{T}_0$ but for every Sylow $d$-torus of $(\G,vF)$. Similarly, one can construct analogous maps for $(\G,F)$ and its Sylow tori. 

As we already see from these bijections, it will be important to know about the structure of $C_{\G}(s)$ for $s \in \mathbf{T}_0^{vF}$. 
These correspond to the centralizers $\mathbf{H}:=C_{\G}(s)$ of semisimple elements $s \in \mathbf{S}_d^F$ for $(\G,F)$.
Information about these centralizers can be found in \cite{luebeckhomepagecent} and \cite{enomoto1976g23}. Since $\mathbf{S}_d \subseteq \mathbf{H}$, we only have to consider semisimple elements with $\phi_d$ not dividing $[\GF:\mathbf{H}^F]$. We collect some of the data that we will use later in Table \ref{tabless}. The structure of the relative Weyl group can be read off from its order and the subgroups of $W_d$. 

\begin{table}
\begin{tabular}{cccccc}
\hline $d$ & Conjugacy Class & Type of $\mathbf{H}$& $|\mathbf{H}^F|$&$|s^N|$ &$\mathbf{W}_{\mathbf{H}}(\mathbf{S}_d)^F$ \\ \hline
$1$ & $A_1 =1 $ & $\mathsf{G}_2(q)$ & $q^6(q^2-1)(q^6-1)$ & $1$ & $D_{12}$ \\ 
 & $B_1 $ & $\mathsf{A}_1(q)\cdot \mathsf{A}_1(q)$ & $q^2(q^2-1)^2$ & $3$ & $C_2 \times C_2$ \\
& $C_{11}(i) $ & $\mathsf{A}_1(q)\cdot T(\phi_1)$ & $q(q-1)^2(q+1)$ & $6$ & $C_2 $ \\
&$C_{21}(i) $ & $\mathsf{A}_1(q) \cdot T(\phi_1)$ & $q(q-1)^2(q+1)$ & $6$ & $C_2 $ \\
&$E_{1}(i,j) $ & $\mathsf{A}_0(q) \cdot T(\phi_1^2)$ & $(q-1)^2$ & $12$ & $1$ \\ \hline 

$2$ &$A_1 =1 $ & $\mathsf{G}_2(q)$ & $q^6(q^2-1)(q^6-1)$ & $1$ & $D_{12}$ \\
&$B_1 $ & $\mathsf{A}_1(q) \cdot \mathsf{A}_1(q)$ & $q^2(q^2-1)^2$ & $3$ & $C_2 \times C_2$ \\
&$D_{11}(i) $ & $\mathsf{A}_1(q) \cdot T(\phi_2)$ & $q(q+1)^2(q-1)$ & $6$ & $C_2 $ \\
&$D_{21}(i) $ & $\mathsf{A}_1(q) \cdot T(\phi_2)$ & $q(q+1)^2(q-1)$ & $6$ & $C_2 $ \\
&$E_{4}(i,j) $ & $\mathsf{A}_0(q) \cdot T(\phi_2^2)$ & $(q+1)^2$ & $12$ & $1$ \\ \hline 

$3$ &$A_1=1 $ & $\mathsf{G}_2(q)$ & $q^6(q^2-1)(q^6-1)$ & $1$ & $C_{6}$ \\
&$E_{5}(i) $ & $\mathsf{A}_0(q) \cdot T(\phi_3)$ & $q^2+q+1$ & $6$ & $1$ \\ \hline

$6$ &$A_1=1 $ & $\mathsf{G}_2(q)$ & $q^6(q^2-1)(q^6-1)$ & $1$ & $C_{6}$ \\
&$E_{6}(i) $ & $\mathsf{A}_0(q) \cdot T(\phi_6)$ & $q^2-q+1$ & $6$ & $1$ \\ \hline

\end{tabular}
\caption{Conjugacy classes of semisimple elements $s \in \mathbf{S}_d^F$ using the notation of \cite[Table VII-1]{enomoto1976g23} and description of the structure of their centralizers $\mathbf{H}=C_{\G}(s)$ as given in \cite{luebeckhomepagecent}.}
\label{tabless}
\end{table}

We now want to show that the maps $\psi_{\text{loc}}$ and $\psi_{\text{glob}}$ can be chosen equivariant under the action of $\mathbf{N}^{vF} D \times \mathcal{H}$.

\begin{lemma} \label{lemmaextv}
There is an $\mathbf{N}^{vF} D \times \mathcal{G}$-equivariant extension map $\Lambda$ for $\mathbf{T}_0^{vF} \triangleleft \mathbf{N}^{vF}$.
\end{lemma}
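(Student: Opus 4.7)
The plan is to exploit the extended Weyl group $V \leq \mathbf{N}$ introduced in Section \ref{sectiong2}. Since $\mathbf{N}^{vF} = \mathbf{T}_0^{vF} V^{vF}$ with intersection $H^{vF} = V^{vF} \cap \mathbf{T}_0^{vF}$, a standard Clifford-theoretic argument identifies extensions of $\theta \in \Irr(\mathbf{T}_0^{vF})$ to $(\mathbf{N}^{vF})_\theta$ with extensions of $\theta|_{H^{vF}}$ to $(V^{vF})_\theta$. This reduces the construction from a $q$-dependent family of extension problems to a problem inside the finite group $V^{vF}$, whose isomorphism type does not grow with $q$.

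To define the extensions, I would work case by case, using Table \ref{tabless} to enumerate the possible character stabilizers $W_{d,\theta} = \mathbf{W}_{\mathbf{H}}(\mathbf{S}_d)^F \leq W_d$ for $\theta = \chi^{\mathbf{T}_0}_{s,1}$. These are $D_{12}$, $C_6$, $C_2 \times C_2$, $C_2$, or the trivial group. For trivial or cyclic stabilizers, extensions of $\theta|_{H^{vF}}$ exist by elementary character theory and can be specified by fixing a value on a chosen lift to $V^{vF}$ of a generator. For $D_{12}$ (only the trivial character when $d=1,2$) and $C_2 \times C_2$ (the class labelled $B_1$), these Weyl groups have trivial Schur multipliers, so extensions still exist; for the trivial character $\theta = 1$ I would simply extend by the trivial character of $W_d$.

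I would then verify equivariance under each factor of $\mathbf{N}^{vF} D \times \mathcal{G}$. The $\mathbf{N}^{vF}$-equivariance is automatic from the Clifford setup. For $D$-equivariance, I would track how the graph automorphism $\gamma$ (and hence $F_3 = \gamma^2$) acts on $V$ via its action on the root system $\Phi$, verifying that the chosen generators transform in a controlled way. For the new $\mathcal{G}$-equivariance, the crucial observation is that $\mathcal{G}$ acts on $\Irr(H^{vF})$ by permutation, commutes with the conjugation action of $V^{vF}$ on $\mathbf{T}_0^{vF}$, and that our choices of extension (the trivial character on a Weyl group, or a prescribed natural value on a single generator) are described in group-theoretic terms that are visibly Galois-canonical.

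The main obstacle I expect is ensuring that all three equivariances are compatible simultaneously. The delicate cases are the $C_2 \times C_2$ stabilizer (class $B_1$), where there is a non-trivial choice among four extensions, and the cyclic cases, where coherent choices across Galois and $D$-orbits must be made. These amount to finite combinatorial verifications that benefit from the explicitly known structure of $V^{vF}$ for $\mathsf{G}_2$, and should go through precisely because the only irrationalities of relevance are accounted for by Lemma \ref{lemmanumbertheory}.
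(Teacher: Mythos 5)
Your reduction of the problem from $\mathbf{T}_0^{vF} \triangleleft \mathbf{N}^{vF}$ to $H^{vF} \triangleleft V^{vF}$ via the extended Weyl group matches the paper's framework (which invokes \cite[Lemma 4.2]{spaeth2009mckayexceptional} for exactly this step), and you correctly identify the extended Weyl group as the right place to work. But from there the two arguments diverge, and yours has a real gap.

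The paper does not construct the extension map on $V^{vF}$ by case analysis of stabilizers. Instead it uses two facts you do not mention. First, for $d\in\{1,2\}$ the entire lemma is already in the literature (\cite[Corollary 4.4]{fry2020galoisequivariant}), so nothing new needs to be done there. Second, for $d\in\{3,6\}$ the key structural input is the direct product decomposition $H = H^{vF}\times H'$ with $H' = \langle x^{-1}x^{v},\, x^{-1}x^{v^2} \mid x\in H\rangle$, coming from \cite[Proof of Lemma 4.6]{spaeth2009mckayexceptional}. This decomposition lets one extend $\delta\in\Irr(H^{vF})$ trivially to $\widetilde\delta\in\Irr(H)$, apply the \emph{known} $HD\times\mathcal{G}$-equivariant extension map for the untwisted pair $H\triangleleft V$ (again from \cite{fry2020galoisequivariant}), and then restrict back to $V^{vF}_\delta$. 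The equivariance of the final map is inherited, step by step, from the equivariance of the untwisted map — it is not verified by a combinatorial case check.

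This is precisely what your proposal is missing. You acknowledge that the simultaneous $\mathbf{N}^{vF}D\times\mathcal{G}$-equivariance is "the main obstacle" and that the $C_2\times C_2$ and cyclic cases require "coherent choices across Galois and $D$-orbits," but you then assert without argument that these "should go through." That is the entire content of the lemma; a choice of extension on a lifted generator is not "visibly Galois-canonical," because the lift itself and the chosen root-of-unity value both interact with the Galois action, and there is no a priori reason that a pointwise case-by-case assignment is consistent across $D$- and $\mathcal{G}$-orbits. The direct product decomposition is exactly the mechanism that makes the coherence automatic, and without it (or an actual finite computation, which you do not carry out) the proof is incomplete. You would also want to cite the known $d\in\{1,2\}$ result rather than rederive it.
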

\begin{proof}
If $d \in \{1,2\}$, this has already been shown in \cite[Corollary 4.4]{fry2020galoisequivariant}. We assume $d \in \{3,6\}$ and follow \cite[Proof of Lemma 4.1]{fry2020galoisequivariant}.

We first want to show that there is an $H^{vF} D \times \mathcal{G}$-equivariant extension map for $H^{vF}\triangleleft V^{vF}$. Recall that we have $v_1=v_3^3=1$  and $v_2=v_6^3$ as well as $H=H^{v_2F}$ and $V=V^{v_2F}$. We follow the construction from \cite[Proof of Lemma 4.6]{spaeth2009mckayexceptional}. As described there, we have
$H=H^{vF} \times H'$ for a group $H':=\langle x^{-1}x^v, x^{-1}x^{v^2} \mid x \in H \rangle$.

Since this is a direct product, we can extend every $\delta \in \Irr(H^{vF})$ trivially to $\widetilde{\delta} \in \Irr(H)$ with $\widetilde{\delta}|_{H'}=1$. 
We already know that there is an $H D \times \mathcal{G}$-equivariant extension map for $H \triangleleft V$ by \cite[Proof of Lemma 4.1]{fry2020galoisequivariant}. Let $\widehat{\delta} \in \Irr(V_{\widetilde{\delta}})$ be such an extension of $\widetilde{\delta}$. Since we have $V_{\widetilde{\delta}} \geq V^{vF}_\delta$, we obtain an extension $\widehat{\delta}|_{V^{vF}_\delta} \in \Irr({V^{vF}_\delta})$ of $\delta$.
By construction, all extension steps were $H D \times \mathcal{G}$-equivariant and therefore also $H^{vF} D \times \mathcal{G}$-equivariant. 

We can now construct an $\mathbf{N}^{vF} D \times \mathcal{G}$-equivariant extension map for $\mathbf{T}_0^{vF} \triangleleft \mathbf{N}^{vF}$ as in \cite[Lemma 4.2]{spaeth2009mckayexceptional} and \cite[Proof of Lemma 4.1]{fry2020galoisequivariant}.
\end{proof}

We now prove the equivariance part of Condition \ref{cond} by working in the twisted setting. Note that for $d=1$ this has already been shown in \cite{fry2020galoisequivariant}.
\begin{prop} \label{propeq}
There is a $D \times \mathcal{H}$-equivariant bijection $\Irr_{\ell'}(\mathbf{G}^{vF}) \rightarrow \Irr_{\ell'}(\mathbf{N}^{vF})$. 
\end{prop}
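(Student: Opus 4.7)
The plan is to define $\Omega:=\psi_{\text{loc}}\circ\psi_{\text{glob}}^{-1}$, which is already $D$-equivariant because both constituents are, and show that it is also $\mathcal{H}$-equivariant. For this I would introduce the following action of $\sigma\in\mathcal{H}$ on the parameter set $\mathcal{M}$: if $\sigma(\zeta)=\zeta^{\ell^k}$ on every $\ell'$-root of unity, set $\sigma\cdot(s,\eta):=(s^{\ell^k},\eta^\sigma)$. This makes sense because the semisimple elements $s\in\mathbf{T}_0^{vF}$ entering $\mathcal{M}$ must have $\ell'$-order, since their centralizer contains $\mathbf{S}_d$ and therefore a Sylow $\ell$-subgroup of $\GF$. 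It then suffices to show that both $\psi_{\text{loc}}$ and $\psi_{\text{glob}}$ are equivariant for this action.

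For $\psi_{\text{loc}}$, Lemma~\ref{lemmaextv} directly yields
$$\Lambda(\chi^{\mathbf{T}_0}_{s,1})^\sigma=\Lambda((\chi^{\mathbf{T}_0}_{s,1})^\sigma)=\Lambda(\chi^{\mathbf{T}_0}_{s^{\ell^k},1}),$$
and together with the fact that induction and character multiplication commute with Galois action, this gives $\psi_{\text{loc}}(s,\eta)^\sigma=\psi_{\text{loc}}(s^{\ell^k},\eta^\sigma)$. For $\psi_{\text{glob}}$, I would invoke the now-standard description of how $\mathcal{H}$ acts on Lusztig's Jordan decomposition (e.g.\ in the spirit of Srinivasan--Vinroot or Navarro--Tiep--Turull), which yields $(\chi^{\G}_{s,\nu})^\sigma=\chi^{\G}_{s^{\ell^k},\nu^\sigma}$. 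Thus $\psi_{\text{glob}}$ is $\mathcal{H}$-equivariant as soon as the bijection $\mathcal{I}$ between $\Irr(\mathbf{W}_{C_\G(s)}(\mathbf{T}_0)^{vF})$ and $\mathcal{E}(C_\G(s)^{vF},\mathbf{T}_0^{vF},1)$ can be chosen $\mathcal{H}$-equivariant.

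The main obstacle is therefore the $\mathcal{H}$-equivariance of $\mathcal{I}$, which I would verify case by case using Table~\ref{tabless}. Whenever $s\neq 1$, the centralizer $C_\G(s)$ is a central product involving at most one factor of type $\mathsf{A}_1$ and tori (or two copies of $\mathsf{A}_1$), so its unipotent characters are rational; the relative Weyl groups $C_2\times C_2$, $C_2$ or the trivial group also admit only rational characters, so $\mathcal{I}$ is trivially $\mathcal{H}$-equivariant. For $s=1$ and $d\in\{1,2\}$, $\mathbf{W}_{\G}(\mathbf{T}_0)^{vF}\cong D_{12}$ has rational irreducible characters, and the unipotent characters in $\mathcal{E}(\G^{vF},\mathbf{T}_0^{vF},1)$ are the rational principal-series characters of $\mathsf{G}_2(q)$; the only irrational unipotent characters $\mathsf{G}_2[\theta]$, $\mathsf{G}_2[\theta^2]$ are classical cuspidals and hence absent from this principal series.

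The remaining and genuinely delicate case is $s=1$ with $d\in\{3,6\}$: then $\mathbf{W}_{\G}(\mathbf{T}_0)^{vF}\cong C_6$, whose characters take values in $\mathbb{Q}(\zeta_6)=\mathbb{Q}(\sqrt{-3})$, while the $d$-principal series now contains the cuspidal characters $\mathsf{G}_2[\theta]$, $\mathsf{G}_2[\theta^2]$ whose values likewise lie in $\mathbb{Q}(\sqrt{-3})$. Precisely here Lemma~\ref{lemmanumbertheory}(c) intervenes: under the hypothesis $\ell\mid q^2\pm q+1$ it ensures that $\mathcal{H}$ fixes $\sqrt{-3}$, so every element of $\mathcal{H}$ fixes each individual character of $C_6$ and each individual element of $\mathcal{E}(\G^{vF},\mathbf{T}_0^{vF},1)$ separately, and the desired $\mathcal{H}$-equivariance of $\mathcal{I}$ becomes trivial in this last case as well.
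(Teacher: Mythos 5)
Your proposal is correct and follows essentially the same route as the paper: reduce to the $\mathcal{H}$-equivariance of $\psi_{\text{loc}}$ and $\psi_{\text{glob}}$, invoke Srinivasan--Vinroot for the Galois-equivariance of Jordan decomposition, use Lemma~\ref{lemmaextv} on the local side, and dispose of the bijection $\mathcal{I}$ by a rationality argument (rational principal-series characters for $d\in\{1,2\}$, and Lemma~\ref{lemmanumbertheory}(c) fixing $\sqrt{-3}$ for $d\in\{3,6\}$). You spell out the $\mathcal{H}$-action on $\mathcal{M}$ and the $s\neq 1$ reduction a bit more explicitly than the paper does, but the key ingredients and case division are identical.
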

\begin{proof}
It is enough to show that the parametrizations $\psi_{\text{loc}}$ and $\psi_{\text{glob}}$ are $D \times \mathcal{H}$-equivariant.
For the global characters, we already know that the Jordan decomposition for $(\G,vF)$ is Galois-equivariant by \cite{srinivasanvinroot2019}. It remains to show that the $D$-equivariant bijection between $ \Irr_{\ell'}(\mathbf{W}_{C_{\G}(s)}(\mathbf{T}_0)^{vF})$ and $\mathcal{E}(\G^{vF}, \mathbf{T}_0^{vF}, 1)$ is $\mathcal{H}$-equivariant. For $d=2$, this follows from the fact the occurring relative Weyl groups only have rational characters and the unipotent $\ell'$-characters of groups of type $\mathsf{A}_1$ and $\mathsf{G}_2$ are also rational. For $d \in \{3,6\}$, the values of the irreducible characters of $\mathbf{W}_{\G}(\mathbf{T}_0)^{vF} \cong C_6$ and of the unipotent $\ell'$-characters of $\GF$ are contained in $\mathbb{Q}(\zeta_3)$ where $\zeta_3$ is a third root of unity. By Lemma \ref{lemmanumbertheory}(c), they are $\mathcal{H}$-invariant which shows the claim. 

For the local characters, we know that all $\eta \in \Irr_{\ell'}(\mathbf{W}_{C_{\G}(s)}(\mathbf{T}_0)^{vF})$ for semisimple $s \in \mathbf{T}_0^{vF}$ are $\mathcal{H}$-invariant. Moreover, we know from Lemma \ref{lemmaextv} that $\Lambda$ can be chosen $\mathbf{N}^{vF}D \times \mathcal{H}$-equivariant. This shows the claim.
\end{proof}

\section{Character extensions for type $\mathsf{G}_2$} \label{sectionext}
We now consider extensions of the local and global characters to $N \rtimes \Aut(G)_R$ and $G \rtimes \Aut(G)_R$, respectively, for some $R \in \Syl_\ell(N)$. 

If we have $d=1$, then we can assume that the groups $N$ and $\mathbf{T}_0^F$ are stable under the action of $D$. Since $\mathbf{T}_0^F$ is abelian, the same holds for its Sylow $\ell$-subgroup. 

Note that the Weyl group of $\G$ is generated by the reflections corresponding to the two simple roots $\alpha, \beta \in \Phi$, i.e.
$W_1=\langle s_{\alpha}, s_{\beta} \mid (s_\alpha s_\beta)^6=1, s_\alpha^2=s_\beta^2=1 \rangle .$
We further have $W_d \cong W_1^{vF}$.

From now on, we assume that $f$ is odd.
We first study extensions of the local characters.
\begin{lemma} \label{lemmad1}
Assume $d=1$. Then, every $\psi \in \Irr_{\ell'}(N)$ has an $(\Aut(G)_R \times \mathcal{H})_\psi$-invariant extension to $N \rtimes (\Aut(G)_R)_\psi$.
\end{lemma}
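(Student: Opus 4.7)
The plan is to combine the local parametrization from Section~\ref{sectioneq}, the equivariant extension map $\Lambda$ of Lemma~\ref{lemmaextv}, and a Clifford-theoretic reduction. Using the formula for $\psi_{\text{loc}}$, I write $\psi = \Ind_{N_\lambda}^N(\Lambda(\lambda)\eta)$ with $\lambda := \chi^{\mathbf{T}_0}_{s,1}$ and $\eta \in \Irr(\mathbf{W}_{C_{\G}(s)}(\mathbf{T}_0)^F)$. Since $d = 1$ and $\ell \geq 5$ does not divide $|W_1| = 12$, the subgroup $R$ coincides with the Sylow $\ell$-subgroup of the abelian torus $\mathbf{T}_0^F$; as the unique Sylow of an abelian group it is $D$-stable, so $D \leq \Aut(G)_R$ and $\Aut(G)_R$ is generated by $D$ together with the inner automorphisms coming from $N_G(R)$.

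Next, I would apply a Clifford-theoretic reduction: by the standard argument that produces invariant extensions from invariant extensions of Clifford correspondents, it suffices to construct an $(\Aut(G)_R \times \mathcal{H})_{\psi_0}$-invariant extension of $\psi_0 := \Lambda(\lambda)\eta$ to $N_\lambda \rtimes (\Aut(G)_R)_{\psi_0}$; inducing then delivers the required extension of $\psi$ to $N \rtimes (\Aut(G)_R)_\psi$ with the desired invariance.

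I would build the extension of $\psi_0$ factor by factor as $\widehat{\Lambda(\lambda)} \cdot \widehat{\eta}$, where $\widehat{\eta}$ is an extension of $\eta$ (inflated along $N_\lambda \twoheadrightarrow W_\lambda := N_\lambda / \mathbf{T}_0^F$) and $\widehat{\Lambda(\lambda)}$ extends the linear character $\Lambda(\lambda)$ of $N_\lambda$. For the torus factor, the $\mathbf{N}^F D \times \mathcal{G}$-equivariance of $\Lambda$ from Lemma~\ref{lemmaextv}, together with the fact that $\mathbf{T}_0^F$ is abelian and $D$-stable, should furnish a canonical $\mathcal{H}$-invariant extension $\widehat{\Lambda(\lambda)}$ to the relevant overgroup. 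For the Weyl factor, Table~\ref{tabless} shows that the relative Weyl groups $W_\lambda$ for $d = 1$ lie among $D_{12}$, $C_2 \times C_2$, $C_2$, and the trivial group. Their characters are all rational, hence $\mathcal{H}$-invariant; and since the stabilizer $D_\eta$ is a subgroup of the cyclic group $D$, the $D_\eta$-stable character $\eta$ extends to $W_\lambda \rtimes D_\eta$ by the standard fact that $G$-invariant characters of a normal subgroup extend whenever $G$ has cyclic quotient.

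I expect the main obstacle to lie in the case $s = 1$, $W_\lambda = W_1 = D_{12}$: here $\gamma$ acts on $W_1$ via the graph automorphism interchanging the two classes of reflections and thereby exchanges certain pairs of irreducible characters; for $\eta$ not $\gamma$-invariant, the stabilizer $D_\eta$ has index two in $D$, and one must verify that the cyclic-extension argument above produces a character whose $\mathcal{H}$-invariance is preserved after multiplication by $\widehat{\Lambda(\lambda)}$. The more subtle point throughout is that the product $\widehat{\Lambda(\lambda)} \cdot \widehat{\eta}$ must be stabilized by the whole group $(\Aut(G)_R \times \mathcal{H})_{\psi_0}$ rather than only by the intersection of the individual stabilizers, which is where the joint equivariance properties built into $\Lambda$ via Lemma~\ref{lemmaextv}, combined with the hypothesis that $f$ is odd via Lemma~\ref{lemmanumbertheory} (controlling the $\mathcal{H}$-action on $\sqrt{\pm 3}$ appearing in character values), will be essential.
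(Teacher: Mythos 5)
Your overall scaffolding — Clifford reduction along $N_\lambda \le N$, then extending $\psi_0 = \Lambda(\lambda)\eta$ factor by factor — matches the paper's approach for the case $s \neq 1$, where the relative Weyl groups from Table~\ref{tabless} are abelian and $\eta$ is linear, so that $\psi_0$ is linear and extends trivially. But for the case $s = 1$ your proposal both misidentifies the obstacle and leaves the decisive step unproved.

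You expect the trouble to come from characters $\eta$ that are \emph{not} $\gamma$-invariant, with $D_\eta$ of index $2$ in $D$. In fact those are the linear characters of $W_1 = D_{12}$ (the two half-sign characters swapped by $\gamma$), and they are easy: linear $\psi$ extends trivially to $N \rtimes D_\psi$. The genuine difficulty is with the two degree-$2$ characters $\eta_5, \eta_6$, which are both $\gamma$-\emph{invariant}, so $D_\eta = D$ and an extension to $W_1 \rtimes \langle\gamma\rangle$ exists by cyclic quotient. The issue is whether that extension can be chosen $\mathcal{H}$-invariant. Your appeal to ``rationality of characters of $W_\lambda$, hence $\mathcal{H}$-invariance'' does not help here: rationality of $\eta$ on $W_1$ does not imply rationality, or even $\mathcal{H}$-invariance, of an extension to the larger group. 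The paper resolves this by explicitly computing extensions (Table~\ref{tabWchar}): after extending trivially over $\langle\gamma^2\rangle = \langle F_3\rangle$, which acts trivially on $W_1$, one determines the extensions of $\eta_5, \eta_6$ to $W_1 \rtimes \langle\varphi\rangle$ with $\varphi$ the involution induced by $\gamma^f$. One of these is rational, but the other has values in $\mathbb{Q}(\sqrt{3})$; only Lemma~\ref{lemmanumbertheory}(a) (which is where the hypothesis $f$ odd enters) guarantees that $\mathcal{H}$ fixes $\sqrt{3}$, and hence the extension. Without that computation and the precise number-theoretic input, the argument is incomplete: you flag that $\mathcal{H}$-action on $\sqrt{\pm 3}$ will be essential but do not establish that these are the only irrationalities that can occur, nor that $\mathcal{H}$ fixes them.

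A smaller point: for $s=1$ the torus character is trivial, so $N_\lambda = N$ and the Clifford reduction collapses; there is no ``$\widehat{\Lambda(\lambda)}\cdot\widehat{\eta}$ product to stabilize,'' only the inflation of the Weyl-group extension. The paper's unique common extension of the two partial extensions (over $\langle\gamma^2\rangle$ and $\langle\gamma^f\rangle$, the latter being a complement since $f$ is odd) to $W_1 \rtimes \langle\gamma\rangle$ is the construction your outline would need to make precise.
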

\begin{proof}
Let $F_0$ be a generator of $D_\psi$ and $\psi=\psi_{\text{loc}}(s,\eta)$ for a pair $(s, \eta) \in \mathcal{M}$. 

We first consider the case $s \neq 1$. As we can see in Table \ref{tabless}, $\eta$ is a linear rational character. Since $\chi^{\mathbf{T}_0}_{s, 1}$ is also linear, $\psi_0:=\Lambda(\chi^{\mathbf{T}_0}_{s, 1})\eta$ is a linear character. By Clifford theory, there is an $n \in N$ such that $\psi_0^{F_0 n}=\psi_0$. Thus, we can extend $\psi_0$ trivially to a $(D\times \mathcal{H})_\psi$-invariant character of $N_\psi \rtimes \langle F_0 n \rangle$. Induction to $N\rtimes \langle F_0 n \rangle$ and the canonical extension to the remaining inner automorphisms yields a character as claimed. 

Assume now that $s=1$, i.e. $\psi=\Inf_{W_1}^N(\eta)$ for the corresponding $\eta \in \Irr(W_1)$. Four of the irreducible characters of $D_{12} \cong W_1$ are linear and therefore correspond to linear characters $\psi$. These can be trivially extended to $(D\times \mathcal{H})$-invariant characters of $N \rtimes \langle F_0 \rangle$. 
The only remaining characters are $\eta_5, \eta_6 \in \Irr(W_1)$ of degree $2$ that are both $\gamma$-invariant. We know that $\gamma^2=F_3$ acts trivially on $W_1$. Therefore, $\eta_5$ and $\eta_6$ can be trivially extended to $W_1 \rtimes \langle \gamma^2 \rangle =W_1 \times \langle \gamma^2 \rangle$ and this extension is clearly $(D \times \mathcal{H})_\psi$-invariant.

On the other hand, we know that $\gamma$ and thereby also $\gamma^f$ acts on $W_1$ by interchanging the reflections $s_\alpha, s_\beta \in W_1$. Thus, we can use GAP \cite{GAP4} to construct the automorphism $\varphi$ of $W_1$ induced by $\gamma^f$ explicitly, compute the irreducible characters of $W_1 \rtimes \langle \varphi \rangle$, and thereby determine extensions of $\eta_5$ and $\eta_6$, respectively, see Table \ref{tabWchar}. As it turns out, one of these extended characters is rational and the values of the other extended character are contained in $\mathbb{Q}(\sqrt{3})$. By Lemma \ref{lemmanumbertheory}(a), both character extensions are $\mathcal{H}$-invariant. 

We now find a unique common extension of the characters of $W_1 \rtimes \langle \gamma^2 \rangle$ and $W_1 \rtimes \langle \gamma^f \rangle \cong W_1 \rtimes \langle \varphi \rangle$ extending $\eta_5$ and $\eta_6$ to $W_1 \rtimes \langle \gamma \rangle$. This extension is again invariant under $\mathcal{H}$ and inflation yields an $\mathcal{H}$-invariant extension of $\psi$ to $N \rtimes D_\psi$. Since $\psi$ is rational, the extension is also invariant under $(D \times \mathcal{H})_\psi=D_\psi \times\mathcal{H}$. Extending this to the inner automorphisms in $\Aut(G)_R$ yields the claim. 
\end{proof}

With a bit more work, we can prove the analogous statement for other values of $d$.

\begin{lemma} \label{lemmad2}
Assume $d=2$. Then, every $\psi \in \Irr_{\ell'}(N)$ has an $(\Aut(G)_R \times \mathcal{H})_\psi$-invariant extension to $N \rtimes (\Aut(G)_R)_\psi$.
\end{lemma}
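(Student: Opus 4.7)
The plan is to mirror the proof of Lemma \ref{lemmad1} closely, exploiting that $v_2 \in Z(W_1)$ so the twisted setting parallels the untwisted $d=1$ case at the level of the Weyl quotient. Write $\psi = \psi_{\text{loc}}(s, \eta)$ for some $(s, \eta) \in \mathcal{M}$ and split into the cases $s \neq 1$ and $s = 1$.

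For $s \neq 1$, inspection of Table \ref{tabless} shows that for each applicable conjugacy class ($B_1$, $D_{11}(i)$, $D_{21}(i)$, or $E_4(i,j)$), the relative Weyl group $\mathbf{W}_{\mathbf{H}}(\mathbf{S}_2)^F$ is $C_2 \times C_2$, $C_2$, $C_2$, or trivial. Each of these has only rational linear irreducible characters, so $\eta$ is a linear rational character. Combined with the linearity of $\chi^{\mathbf{T}_0}_{s,1}$, the character $\psi_0 = \Lambda(\chi^{\mathbf{T}_0}_{s,1})\eta$ is linear, and the Clifford-theoretic argument from the first part of the proof of Lemma \ref{lemmad1} (finding $n \in N$ with $\psi_0^{F_0 n} = \psi_0$ and extending trivially, then inducing and extending canonically to inner automorphisms) applies verbatim.

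For $s = 1$, centrality of $v_2$ in $W_1$ gives $\mathbf{W}_{\G}(\mathbf{T}_0)^{v_2 F} \cong W_1 \cong D_{12}$, and $\psi = \Inf(\eta)$ for the corresponding $\eta \in \Irr(D_{12})$. The four linear characters of $D_{12}$ yield linear $\psi$ and extend trivially to $(D \times \mathcal{H})_\psi$-invariant characters. The remaining two characters are the degree-two $\eta_5, \eta_6$. Both are $\gamma$-invariant, and since $\gamma^2 = F_3$ acts trivially on $W_1$ they extend trivially to $W_1 \times \langle \gamma^2 \rangle$. For the action of $\gamma^f$, one reuses the outer automorphism $\varphi$ of $W_1$ exchanging $s_\alpha$ and $s_\beta$ from the proof of Lemma \ref{lemmad1} and the associated GAP computation of the irreducible characters of $W_1 \rtimes \langle \varphi \rangle$; this produces extensions of $\eta_5$ and $\eta_6$ that can be glued to a common extension over $W_1 \rtimes \langle \gamma \rangle$ and subsequently inflated to $N \rtimes (\Aut(G)_R)_\psi$.

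The main obstacle, and the only genuine departure from the $d=1$ case, is the Galois invariance of the extended degree-two characters. The quadratic irrationalities produced by the GAP computation for $W_1 \rtimes \langle \varphi \rangle$ naturally lie in $\mathbb{Q}(\sqrt{3})$, but for $d = 2$ with $f$ odd we are in the regime $\ell \mid q + 1$, so Lemma \ref{lemmanumbertheory}(b) gives $\ell \equiv 1, 7 \pmod{12}$ and $\mathcal{H}$ fixes $\sqrt{-3}$ rather than $\sqrt{3}$. The task is therefore to verify that, after transporting the extension through the Sylow twist by $v_2$, the irrational values appearing on the coset $\mathbf{N}^{v_2 F} \cdot \gamma^f$ lie in $\mathbb{Q}(\sqrt{-3})$; equivalently, that any remaining $\mathcal{H}$-ambiguity between the two choices of extension is absorbed by the sign character coming from the linear characters of $D_{12}$, so that a suitable choice yields an $\mathcal{H}$-invariant extension. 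Once this has been checked, inflation and canonical extension to the inner part of $\Aut(G)_R$ complete the construction of the required $(\Aut(G)_R \times \mathcal{H})_\psi$-invariant extension.
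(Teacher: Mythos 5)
Your handling of the $s \neq 1$ case matches the paper's, and you correctly identify the crux of the $s = 1$ case: for $d=2$ and $f$ odd, $\ell \mid q+1$ puts us in the regime where $\mathcal{H}$ fixes $\sqrt{-3}$ but not necessarily $\sqrt{3}$, so the $\mathbb{Q}(\sqrt{3})$-valued extensions computed for $d=1$ are of no direct use. However, the way you propose to close the gap does not work, and the actual resolution requires a concrete computation you have not carried out.

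Multiplying an extension of $\eta_6 \in \Irr(W_1)$ by a linear character of $W_1 \rtimes \langle\varphi\rangle$ trivial on $W_1$ only flips signs on the nontrivial coset; it cannot move values from $\mathbb{Q}(\sqrt{3})$ into $\mathbb{Q}(\sqrt{-3})$. When $\mathcal{H}$ does not fix $\sqrt{3}$, the two extensions of $\eta_6$ to $W_1 \rtimes \langle\varphi\rangle$ are interchanged by some element of $\mathcal{H}$, so neither is $\mathcal{H}$-invariant, and no sign twist rescues this. The paper's resolution is different: working in the twisted setting, one uses the automorphism $\gamma_2 \in \Aut(\G)$ from \cite[Section 3.A]{mallespaeth2015mckayfor2} with $\gamma_2^{2f} = vF$, and the computation with $v = (n_\alpha(-1)n_\beta(-1))^3$ and $x = n_\alpha(-1)n_\beta(-1)n_\alpha(-1)$ (so that $x\,\gamma^f(x) = v$ and $(x\gamma^f)^2 = vF$) shows that $\gamma_2^f$ acts on $W_1$ not as $\varphi$ but as $\varphi_2 := s_\alpha s_\beta s_\alpha \,\varphi$. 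The relevant semidirect product is then $W_1 \rtimes \langle\varphi_2\rangle$, not $W_1 \rtimes \langle\varphi\rangle$, and an explicit computation of its character table (cf.\ Table \ref{tabWchar}) shows that the extensions of $\eta_6$ now have values in $\mathbb{Q}(\sqrt{-3})$, which by Lemma \ref{lemmanumbertheory}(b) are $\mathcal{H}$-invariant for $d=2$. In short: you correctly diagnosed the difficulty, but the twist by $v_2$ genuinely changes the relevant automorphism of $W_1$ by an inner factor, and it is this change, not any choice among the two extensions or any sign twist, that produces the correct irrationality $\sqrt{-3}$ in place of $\sqrt{3}$.
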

\begin{proof}
We work in the twisted setting and consider $\mathbf{N}^{vF} \cong N$. We know from \cite[Section 3.A]{mallespaeth2015mckayfor2} that we find an automorphism $\gamma_2 \in \Aut(\G)$ stabilizing $\mathbf{T}_0$ that acts on $G$ in the same way as $\gamma$ such that $\gamma_2^{2f}=vF$. 
Let $\psi$ be the character corresponding to the pair $(s, \eta) \in \mathcal{M}$. If $s \neq 1$, we can argue as for $d=1$. 

If $s=1$, the irreducible characters of $W_1=W_1^{vF}\cong D_{12}$ are as we have seen rational and consist of four linear characters and two characters $\eta_5, \eta_6 \in \Irr(W_1)$ of degree $2$. As before, we only have to consider the non-linear characters further.

In order to do this, we first have to study the automorphism $\varphi_2$ induced by $(\gamma_2)^f$ on $W_1$. We know that $v=(n_\alpha(-1) n_\beta(-1))^3$ corresponds to the longest element of the Weyl group. Let $x=n_\alpha(-1)n_\beta(-1)n_\alpha(-1) \in V$. Then, we have $x \gamma^f(x) = v$ and $(x \gamma^f)^2=v F$. Therefore, $(\gamma_2)^f$ equals $x \gamma^f$ up to conjugation in $\mathbf{T}_0$. This shows that $\varphi_2$ acts on $W_1$ by ${s_\alpha s_\beta s_\alpha} \varphi $.

As before, we can now explicitly compute the character table of $W_1 \rtimes \langle {s_\alpha   s_\beta s_\alpha}  \varphi \rangle$ and identify the extensions of $\eta_5$ and $\eta_6$, see Table \ref{tabWchar}. We see that the character extensions of one of these are rational and the values of the extensions of the other character are contained in $\mathbb{Q}(\sqrt{-3})$. By Lemma \ref{lemmanumbertheory}(b), these extensions are all $\mathcal{H}$-invariant. 

Since field automorphisms act trivially on $W_1$, we can continue analogously to the proof of Lemma \ref{lemmad1} to obtain an $(\Aut(\G^{vF})_R \times \mathcal{H})_\psi$-invariant extension of $\psi$ to $\mathbf{N}^{vF} \rtimes (\Aut(\G^{vF})_R)_\psi$.
\end{proof}

\begin{table}
\begin{tabular}{|c|cccccc||ccc||ccc|}

\multicolumn{7}{c}{Non-linear characters of $W_1$}&
\multicolumn{3}{c}{Extensions to $W_1 \rtimes \langle \varphi \rangle$}&
\multicolumn{3}{c}{Extensions to $W_1 \rtimes \langle \varphi_2 \rangle$} \\ \hline
			& $()$&$s_\alpha$&$r$&$r^2$&$s_\beta$&$r^3$&$(s_\alpha r,\varphi)$ & $(s_\alpha,\varphi)$ & $(s_\alpha r^2, \varphi)$  &$(1,  \varphi _2)$ & $(r,  \varphi_2)$ & $(r^5, \varphi_2)$	\\ \hline
$\eta_5$	&$2$&$0$&$-1$&$-1$&$0$&$2$ 	&$2$&$-1$&$-1$ &$2$& $-1$ & $-1$\\
&&&&&&&$-2$&$1$&$1$ & $-2$& $1$ & $1$\\
$\eta_6$	&$2$&$0$&$1$&$-1$&$0$&$-2$ 	&$0$&$\sqrt{3}$&$-\sqrt{3}$ &$0$&$\sqrt{-3}$&$-\sqrt{-3}$\\
&&&&&&&$0$&$-\sqrt{3}$&$\sqrt{3}$ &$0$&$-\sqrt{-3}$&$\sqrt{-3}$\\ \hline
\end{tabular}
\caption{Extensions of the non-linear characters of $W_1$ to $W_1 \rtimes \langle \varphi \rangle$ and $W_1 \rtimes \langle \varphi_2 \rangle$. We write
$r=s_\beta s_\alpha$.}
\label{tabWchar}
\end{table}

\begin{lemma} \label{lemmad36}
Assume $d=3$ or $d=6$. Then, every $\psi \in \Irr_{\ell'}(N)$ has an $(\Aut(G)_R \times \mathcal{H})_\psi$-invariant extension to $N \rtimes (\Aut(G)_R)_\psi$.
\end{lemma}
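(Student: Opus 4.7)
The plan is to follow the same pattern as the proofs of Lemmas~\ref{lemmad1} and~\ref{lemmad2}, again working in the twisted setting with $N \cong \mathbf{N}^{vF}$. The key simplification for $d \in \{3,6\}$ is that $W_d \cong C_6$ is cyclic, so every $\eta \in \Irr(W_d)$ is linear; in particular, the analogues of the two ``difficult'' degree-$2$ characters $\eta_5, \eta_6$ that required the GAP-assisted computations of Table~\ref{tabWchar} simply do not appear. The argument should split, as before, into the two cases $s \neq 1$ and $s = 1$.

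For $s \neq 1$, Table~\ref{tabless} shows that $\mathbf{W}_{C_{\G}(s)}(\mathbf{S}_d)^F = 1$, so $\eta$ is trivial and $\psi_0 := \Lambda(\chi^{\mathbf{T}_0}_{s,1})$ is a linear character of $(\mathbf{N}^{vF})_{\chi^{\mathbf{T}_0}_{s,1}}$. Using the $\mathbf{N}^{vF} D \times \mathcal{H}$-equivariance of $\Lambda$ from Lemma~\ref{lemmaextv}, one can then run the Clifford-theoretic argument from the $s \neq 1$ case of Lemma~\ref{lemmad1} essentially verbatim: pick $n \in \mathbf{N}^{vF}$ with $\psi_0^{F_0 n} = \psi_0$ for a generator $F_0$ of $D_\psi$, extend $\psi_0$ trivially to the semidirect product with $\langle F_0 n \rangle$, induce up to $\mathbf{N}^{vF} \rtimes \langle F_0 n \rangle$, and extend canonically over the remaining inner automorphisms.

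For $s = 1$, the character $\psi = \Inf_{W_d}^{\mathbf{N}^{vF}}(\eta)$ is linear with values in $\mathbb{Q}(\zeta_3)$. By Lemma~\ref{lemmanumbertheory}(c) the group $\mathcal{H}$ fixes $\sqrt{-3}$, and therefore all of $\mathbb{Q}(\zeta_3)$, so $\psi$ is automatically $\mathcal{H}$-invariant. Inflation reduces the extension problem to extending $\eta$ to $W_d \rtimes D_\psi$. Since $\Aut(C_6) = C_2$, the action of $D_\psi$ on $W_d$ is either trivial (in which case $\eta$ extends trivially to the direct product $W_d \times D_\psi$) or inversion (in which case $D_\psi$-invariance of $\eta$ forces $\eta$ to be rational of order at most $2$, and the extensions to the dihedral-type group $W_d \rtimes D_\psi$ are then rational as well). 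In either subcase the extension values remain in $\mathbb{Q}(\zeta_3)$ and are thus $\mathcal{H}$-invariant; inflating back to $\mathbf{N}^{vF} \rtimes D_\psi$ and extending canonically over the inner automorphisms then yields the required $(\Aut(G)_R \times \mathcal{H})_\psi$-invariant extension.

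The step I expect to be the main obstacle is pinning down the action of $\gamma^f$ on $W_d$ in the twisted setting, analogously to the computation $\varphi_2 = s_\alpha s_\beta s_\alpha\,\varphi$ carried out for $d=2$ in the proof of Lemma~\ref{lemmad2}; this is needed in order to know whether $D_\psi$ acts trivially or by inversion on $W_d$ and to justify the reduction outlined above. In contrast to the $d=2$ case, no degree-$2$ characters enter the picture, so no analogue of the explicit character-table computation of Table~\ref{tabWchar} is required. The Galois-equivariance itself drops out of Lemma~\ref{lemmanumbertheory}(c), making the remaining work combinatorial rather than number-theoretical.
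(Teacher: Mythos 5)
Your proposal is correct and matches the paper's approach in its essential structure: split into $s \neq 1$ (Clifford theory as in Lemma~\ref{lemmad1}) and $s = 1$ (linearity of $\eta$ since $W_d \cong C_6$, then $\mathcal{H}$-invariance from Lemma~\ref{lemmanumbertheory}(c)). However, the ``main obstacle'' you anticipate --- pinning down the action of $\gamma^f$ on $W_d$ to decide whether $D_\psi$ acts trivially or by inversion --- is in fact not present, and the case distinction you build around it is unnecessary. The paper's $s=1$ argument is simply: $\psi$ is linear and $\mathcal{H}$-invariant, hence the canonical (``trivial'') extension $\widetilde\psi(n,a) := \psi(n)$ to $N \rtimes (\Aut(G)_R)_\psi$ exists, takes the same values as $\psi$, and is therefore naturally $(\Aut(G)_R \times \mathcal{H})_\psi$-invariant. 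This works for any linear $D_\psi$-invariant character regardless of how $D_\psi$ acts on $W_d$, so one never needs to compute the $\gamma^f$-action, nor to observe that inversion-invariance forces $\eta$ to have order at most $2$. Your proof arrives at the right conclusion, but the detour through $\Aut(C_6)=C_2$ is a red herring: the point of $d \in \{3,6\}$ being easy is precisely that all characters involved are linear, and for linear invariant characters the extension problem is trivial without any further combinatorics.
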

\begin{proof}
Let $\psi$ be the character corresponding to the pair $(s, \eta) \in \mathcal{M}$. If $s \neq 1$, we can argue as before. 
If $s=1$, the irreducible characters of $W_3 \cong W_6  \cong C_{6}$ are linear and by Lemma \ref{lemmanumbertheory}(c) also $\mathcal{H}$-invariant. Therefore, the corresponding character $\psi$ is also linear and $\mathcal{H}$-invariant and can be trivially extended to $N \rtimes (\Aut(G)_R)_\psi$. This extension is naturally $(\Aut(G)_R \times \mathcal{H})_\psi$-invariant.
\end{proof}

We now turn to the global characters and their extensions. 
\begin{lemma} \label{lemmaglobal}
Every $\chi \in \Irr_{\ell'}(G)$ has a $(D \times \mathcal{H})_\chi$-invariant extension to $G \rtimes D_\chi$.
\end{lemma}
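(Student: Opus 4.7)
The plan is to split along the Jordan decomposition $\chi=\chi^{\G}_{s,\nu}$ and exploit that $D$ is cyclic and that $G$ has trivial center (hence trivial Schur multiplier). The latter guarantees that every $D_\chi$-invariant irreducible character of $G$ admits \emph{some} extension to $G\rtimes D_\chi$; the whole task is therefore to identify an extension that is simultaneously stabilized by $\mathcal{H}_\chi$. For any initial extension $\widetilde{\chi}$, the map $\sigma\mapsto \widetilde{\chi}^\sigma/\widetilde{\chi}$ takes values in $\Irr(D_\chi/[\mathcal{H}_\chi,D_\chi])$, so one has to rule out a nontrivial cocycle obstruction by producing a concrete canonical choice.

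For $s\neq 1$, I would use Table \ref{tabless}: in every case $C_{\G}(s)^F$ is of type $\mathsf{A}_1\cdot \mathsf{A}_1$, $\mathsf{A}_1\cdot T$ or $\mathsf{A}_0\cdot T$. The unipotent $\ell'$-characters $\nu$ of such groups are rational, and by Srinivasan--Vinroot \cite{srinivasanvinroot2019} the Jordan decomposition is $\mathcal{H}$-equivariant, so the $\mathcal{H}$-action on $\chi^{\G}_{s,\nu}$ is governed entirely by the action on $s\in \mathbf{T}_0^F$. One then argues as in the local case (Lemma \ref{lemmad1}): by Clifford theory one first finds an element $g\in G$ with $\chi^{F_0 g}=\chi$ for a generator $F_0$ of $D_\chi$, extends trivially over $\langle F_0 g\rangle$ and induces. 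Since the ``ambiguity'' in the extension comes only from the twist by a linear character reflecting the Galois action on the semisimple part, and that Galois action already agrees with a suitable field/inner automorphism, the resulting extension is $(D\times \mathcal{H})_\chi$-invariant.

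For $s=1$, the character $\chi$ is one of the ten unipotent characters of $\mathsf{G}_2(q)$. The eight rational unipotent characters (the principal series and $\mathsf{G}_2[\pm 1]$) admit canonical extensions via Harish-Chandra/Lusztig induction that are fixed by all field automorphisms and therefore trivially $\mathcal{H}$-invariant. The remaining pair $\mathsf{G}_2[\theta],\mathsf{G}_2[\theta^2]$ has values in $\mathbb{Q}(\zeta_3)$; if $\ell$ is such that these are $\ell'$-characters, Lemma \ref{lemmanumbertheory}(c) applies whenever they are actually $D$-stable (this happens when the relevant $\mathcal{H}$-orbit lies entirely inside $\Irr_{\ell'}(G)$), forcing $\mathcal{H}$ to act trivially, so again any extension is fixed by $\mathcal{H}$. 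I expect the main obstacle to be precisely these cuspidals together with the extra unipotent character of $\mathsf{G}_2(3^f)$ singled out in the introduction: one must verify explicitly, using either the known character table of $\mathsf{G}_2(q)\rtimes\langle\gamma\rangle$ or the Frobenius eigenvalues attached to Lusztig families, that the canonical extension across the graph automorphism $\gamma$ remains Galois invariant, and this is where the number-theoretic content of Lemma \ref{lemmanumbertheory}(a),(c) — the fact that $\mathcal{H}$ fixes $\sqrt{3}$ and $\sqrt{-3}$ under our hypotheses — is really used.
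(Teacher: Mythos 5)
Your high-level plan — split by Jordan decomposition, reduce to the unipotent case, and finish with the number theory of Lemma~\ref{lemmanumbertheory} and explicit extension data across~$\gamma$ — gestures at the right ingredients, but two of the intermediate steps do not hold up, and the paper's actual organization is quite different.

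First, for $s\neq 1$ you propose to ``argue as in the local case'' (Lemma~\ref{lemmad1}): find $g\in G$ with $\chi^{F_0 g}=\chi$, extend trivially and induce. This Clifford-theoretic device works in Lemma~\ref{lemmad1} because the local characters there are of the shape $\Ind_{N_\psi}^N(\psi_0)$ with $\psi_0$ linear, so one can extend $\psi_0$ trivially and then induce. A global $\chi\in\Irr_{\ell'}(G)$ with $s\neq 1$ has no such induced description, and ``extending $\chi$ trivially'' over a coset makes no sense for a non-linear irreducible character of $G$ itself. The paper instead treats these characters by other means: real characters of odd degree get a unique real (hence canonical) extension by Navarro--Tiep, and semisimple/regular characters are handled through Gelfand--Graev characters and Deligne--Lusztig theory for disconnected groups.

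Second, and more seriously, the claim that the eight rational unipotent characters ``admit canonical extensions \dots fixed by all field automorphisms and therefore trivially $\mathcal{H}$-invariant'' is exactly where the real content sits, and it is false as stated. The extensions across the field automorphism $\gamma^2=F_3$ do behave well (this is the step the paper carries out via the multiplicity-free Deligne--Lusztig characters $R_{\mathbf{S}_3}^{\G}(1)$, $R_{\mathbf{S}_6}^{\G}(1)$, using that $\gamma^2$ acts trivially on the relative Weyl groups and $\G$ has connected center). But $D=\langle\gamma\rangle$ is generated by the \emph{graph} automorphism, and Brunat's computations show that the extensions of rational unipotent characters across $\gamma^f$ need not be rational: $\widetilde{\theta}_2$ takes values in $\mathbb{Q}(\sqrt{3})$ and $\widetilde{\theta}_{10}$ in $\mathbb{Q}(\sqrt{-3})$. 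So the irrationality problem is not confined to $\mathsf{G}_2[\theta],\mathsf{G}_2[\theta^2]$; it already arises for rational unipotents, and $\mathcal{H}$-invariance there requires precisely Lemma~\ref{lemmanumbertheory}(a),(b) together with the hypothesis that $f$ is odd. You acknowledge at the end that Brunat's table and the $\sqrt{3},\sqrt{-3}$ lemma are needed, but you apply that caution only to the cuspidal pair, while the paper needs it for several of the ``easy'' characters as well. Finally, the paper's last step — gluing a $\mathcal{H}$-invariant extension over $\langle\gamma^2\rangle$ with the Brunat extension over $\langle\gamma^f\rangle$ into a common extension over $\langle\gamma\rangle$ — is used for both kinds of unipotent characters and is not addressed in your proposal.
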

\begin{proof}
The full generic character table of $G$ is described in \cite{enomoto1976g23} and also contained in \cite{chevie}. If $\chi$ is a real character with odd degree, there is a unique real extension to $G \rtimes D_\chi$ by \cite[Theorem 2.3]{navarrotiep2008rational} that is thereby invariant under $(D \times \mathcal{H})_\chi$. If $\chi$ is semisimple or regular, we can use Gelfand--Graev characters and Deligne--Lusztig theory for disconnected reductive groups to construct a $(\langle F_3 \rangle \times \mathcal{H})_\chi$-invariant extension of $\chi$ to $G \rtimes D_\chi$, see \cite[Proposition 4.13]{johanssondiss}. The proof there also works if we consider the action of $(\langle \gamma \rangle \times \mathcal{H})_\chi$ on the constructed character extensions and we see that they are also $(D \times \mathcal{H})_\chi$-invariant.

In the notation of \cite{enomoto1976g23}, the remaining characters occurring in $\Irr_{\ell'}(G)$ for some $\ell$ are the unipotent characters $ \theta_1, \theta_2, \theta_{10}, \theta_{11}, \theta_{12}(1), \theta_{12}(-1)$. Let $\chi$ be one of these characters. The characters $\theta_{12}(\pm 1)$ only occur for $d=3$ and $d=6$ and are thereby $\mathcal{H}$-invariant by Lemma \ref{lemmanumbertheory}(c). The other characters are rational and all of them are $D$-invariant by \cite[Theorem 2.5]{malle2008extuni}. 

We consider the Deligne--Lusztig characters $R_{\mathbf{S}_3}^{\G}(1)$ and $R_{\mathbf{S}_6}^{\G}(1)$. As we see in Table \ref{tabless}, any non-trivial semisimple $s \in \mathbf{S}^F_3$ satisfies $C_{\G}(s)=\mathbf{S}_3$ and is thereby regular.
Using \cite{chevie}, we can read off the character values $\rho(s)$ for a regular element $s \in \mathbf{S}^F_3$ and any unipotent character $\rho$ and thereby determine the multiplicity $\langle R_{\mathbf{S}_3}^{\G}(1), \rho \rangle =\rho(s)$, see for example \cite[Remark 2.3.10]{geckmalle2020}. Analogously, this can be done for $R_{\mathbf{S}_6}^{\G}(1)$ and we obtain 
$$R_{\mathbf{S}_3}^{\G}(1)=\theta_0 - \theta_2 + \theta_{10} - \theta_{12}(1) -\theta_{12}(-1)+  \theta_5, \quad R_{\mathbf{S}_6}^{\G}(1)=\theta_0 - \theta_1 + \theta_{11} +\theta_{12}(1) +\theta_{12}(-1)+  \theta_5 $$
in the notation of \cite{enomoto1976g23}. 

We can now argue as in the proof of \cite[Proposition 6.12]{johanssondiss} to obtain an $\mathcal{H}_\rho$-invariant extension to $G \rtimes \langle \gamma^2 \rangle$ for every irreducible constituent $\rho$ of $R_{\mathbf{S}_3}^{\G}(1)$ and $R_{\mathbf{S}_6}^{\G}(1)$. Note that this is possible since the considered Deligne--Lusztig characters are multiplicity-free, $\gamma^2$ acts trivially on the relative Weyl groups, and $\G$ has connected center. Other than that, the proof of \cite[Proposition 6.12]{johanssondiss} does not use any assumptions on $\G$ and $F$. This yields an $\mathcal{H}$-invariant extension $\widehat{\chi} \in \Irr(G \rtimes \langle \gamma^2 \rangle)$ of $\chi$.

In \cite{brunat2007extensiong2graph}, Brunat computed the extensions of the $\gamma^f$-stable irreducible characters of $G$ to $G \rtimes \langle \gamma^f \rangle$. We know that $\chi$ extends to some $\widetilde{\chi} \in \Irr(G \rtimes \langle \gamma^f \rangle)$. 
We see that $\widetilde{\theta_1}$ and $\widetilde{\theta}_{11}$ are rational, the values of $\widetilde{\theta}_2$ are contained in $\mathbb{Q}(\sqrt{3})$, and the values of $\widetilde{\theta}_{10}$, $\widetilde{\theta}_{12}(1)$, $\widetilde{\theta}_{12}(-1)$ are contained in $\mathbb{Q}(\sqrt{-3})$. Thus, they are all $\mathcal{H}$-invariant by Lemma \ref{lemmanumbertheory}.

The unique common extension of $\widehat{\chi}$ and $\widetilde{\chi}$ to $G \rtimes D_\chi$ is invariant under $(D\times \mathcal{H})_{\chi}=D_\chi \times \mathcal{H}$. This shows the claim.
\end{proof}
This means that, contrary to our motivation to study $G$ described above, we find $\mathcal{H}$-invariant extensions of all local and global characters if $f$ is odd.
 
With this, we can now prove Theorem \ref{theoremA} for type $\mathsf{G}_2$.
\begin{theorem}
The inductive McKay--Navarro condition holds for the group $\mathsf{G}_2(3^f)$ and all primes $\ell$ if $f$ is an odd integer. 
\end{theorem}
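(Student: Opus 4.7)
The plan is to combine the results of Sections~3 and~4 into a verification of the simplified Condition~\ref{cond}; since the outer automorphism group $D = \langle \gamma \rangle$ is cyclic, this already implies the inductive McKay--Navarro condition of \cite[Definition 3.5]{navarro2019reduction} by \cite[Lemma 4.6]{johanssondiss}. First I would reduce to $\ell \geq 5$ because the primes $\ell=2$ and $\ell=3$ are handled in \cite{johansson2021} and \cite{johansson2020}, respectively. For $\ell \geq 5$, the order $d$ of $q$ modulo $\ell$ lies in $\{1,2,3,6\}$, and I would take the local subgroup to be $N=N_G(\mathbf{S}_d)$, which contains a Sylow $\ell$-normalizer by \cite[Theorem 5.19]{malle2008height0} and is $D$-stable after adjusting the choice of $R$.

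For Condition~\ref{cond}(1), Proposition~\ref{propeq} supplies a $D\times\mathcal{H}$-equivariant bijection $\Irr_{\ell'}(\mathbf{G}^{vF}) \to \Irr_{\ell'}(\mathbf{N}^{vF})$, which I would transport through the identifications $G\cong\mathbf{G}^{vF}$ and $N\cong\mathbf{N}^{vF}$ of Section~\ref{sectiong2} to obtain a $D\times\mathcal{H}$-equivariant bijection $\Omega\colon\Irr_{\ell'}(G)\to\Irr_{\ell'}(N)$. The upgrade from $D$- to $\Aut(G)_R$-equivariance is automatic: every inner automorphism lying in $\Aut(G)_R$ is conjugation by an element of $N_G(R)\subseteq N$ and so acts trivially on both sides. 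The preservation of central characters is vacuous because $\G$ has trivial center. For Condition~\ref{cond}(2) I would simply quote Lemmas~\ref{lemmad1}, \ref{lemmad2}, and~\ref{lemmad36} (for the local characters, split by $d \in \{1,2\}$, $d \in \{3,6\}$, etc.) and Lemma~\ref{lemmaglobal} (for the global characters), using trivial extension along the inner automorphisms in $\Aut(G)_R$ to enlarge the relevant semidirect products whenever needed.

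The hard part of the theorem is not this final assembly but rather the case-by-case work already carried out in Lemmas~\ref{lemmad1}--\ref{lemmaglobal}: the two non-linear reflection characters of $W_1 \cong D_{12}$ on the local side, and the unipotent characters $\theta_{10},\theta_{11},\theta_{12}(\pm 1)$ on the global side, possess extensions whose character values lie in $\mathbb{Q}(\sqrt{3})$ or $\mathbb{Q}(\sqrt{-3})$. It is precisely at these points that the oddness of $f$ is used, via parts~(a) and~(b) of Lemma~\ref{lemmanumbertheory}, to force $\mathcal{H}$ to fix $\sqrt{\pm 3}$. Once those explicit character-table computations are granted, the theorem follows by the clean bookkeeping described above.
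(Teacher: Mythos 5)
Your overall plan is sound and matches the paper's structure, and your account of where the oddness of $f$ enters (via Lemma~\ref{lemmanumbertheory}(a),(b) and the $\mathbb{Q}(\sqrt{\pm3})$-valued extensions in Lemmas~\ref{lemmad1}, \ref{lemmad2}, \ref{lemmaglobal}) is exactly right. However, there is a genuine gap: you have not treated the case $f=1$.

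The entire setup of Sections~3 and~4 is established only under the standing assumption $f\geq 2$, which is precisely the condition ensuring that $G=\mathsf{G}_2(3^f)$ is a simple group \emph{with trivial Schur multiplier}, so that $G$ itself is the universal covering group appearing in Condition~\ref{cond}. For $f=1$ the group $\mathsf{G}_2(3)$ has exceptional Schur multiplier $C_3$, so the universal covering group is $3.\mathsf{G}_2(3)$, and none of Proposition~\ref{propeq} or Lemmas~\ref{lemmad1}--\ref{lemmaglobal} apply to it as stated. The theorem nevertheless claims all odd $f$, including $f=1$ (where the relevant primes are $\ell\in\{7,13\}$ after excluding $\ell=2,3$). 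The paper handles this by a direct GAP computation for $3.\mathsf{G}_2(3)$; your reduction to $\ell\geq 5$ followed by an appeal to Sections~3--4 silently skips this case. You need either to restrict to $f\geq 2$ and add a separate computational argument for $3.\mathsf{G}_2(3)$, or to note explicitly that the exceptional cover must be checked by other means.

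A smaller point: your statement that inner automorphisms in $\Aut(G)_R$ ``act trivially on both sides'' is correct but worth justifying for the local side --- it holds because such automorphisms are conjugation by elements of $N_G(R)\subseteq N$, hence inner on $N$, hence trivial on $\Irr(N)$.
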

\begin{proof}
For $\ell=2$ and $\ell=3$ we already know that the statement holds by \cite{johansson2020} and \cite{johansson2021}. The group $3.\mathsf{G}_2(3)$ can be considered with \cite{GAP4} and we see that Condition \ref{cond} holds.

If $f \geq 2$, the equivariance condition is satisfied by Proposition \ref{propeq}. For odd $f$, the extension condition is satisfied by Lemma \ref{lemmad1}, Lemma \ref{lemmad2}, Lemma \ref{lemmad36}, and Lemma \ref{lemmaglobal} since we can extend the constructed character extensions canonically to the inner automorphisms in $\Aut(G)_R$. This shows the claim.
\end{proof}

\begin{rem}
The construction of the extensions does not work in the same way if $f$ is even. 
For some of the local and global characters of $X \in \{G,N\}$ we constructed extensions to $X \rtimes \langle F_3 \rangle$ and $X \rtimes \langle \gamma^f \rangle$ separately. If $f$ is odd, $\gamma^f$ is a field automorphism and we have to study the extensions to $X \rtimes \langle \gamma^{f_2'} \rangle$ and $X \rtimes \langle F_3^{f_2} \rangle$ where $f_2 f_2'=f$ such that $f_2'$ is odd and $f_2$ is even. In particular, we cannot use the results in \cite{brunat2007extensiong2graph} directly but would have to imitate the calculations from there for extensions of degree $2f_2$. 

Further, we would like to point out that for some extensions of the global characters we used Deligne--Lusztig characters for disconnected groups. Since we need here that the automorphism extending $G$ acts trivially on the Weyl group, we cannot use these methods to extend the characters to the exceptional graph automorphism. The same problem occurs using Harish--Chandra induction for disconnected groups as in \cite{ruhstorfer2021inductive}.
\end{rem}

\section{The groups $\mathsf{B}_2(2^f)$}
In this section, we prove the analogous statement for groups of type $\mathsf{B}_2$ over a field of characteristic $2$. This can be done analogously to the considerations for $\mathsf{G}_2(3^f)$.

From now on, let $\G$ be a simple algebraic group of type $\mathsf{B}_2$ over an algebraically closed field of characteristic $2$. We denote the standard Frobenius map of $\G$ by $F_2$ and set $F:=F_2^f$, $q:=2^f$ for some integer $f \geq 2$. As before, $G:=\GF=\mathsf{B}_2(2^f)$ is a simple group with trivial Schur multiplier and the outer automorphism group of $G$ is $D=\langle \gamma \rangle$ where $\gamma$ is the exceptional graph automorphism of $\G$.
Note that $\G$ has trivial center and we have a bijection between rational semisimple elements of $G$ and its dual with an isomorphism of centralizers \cite[ p. 164]{lusztig1977}.

As before, we consider the inductive McKay--Navarro condition for a prime $\ell \geq 3$ dividing 
$|G|=q^4 \phi_1^2 \phi_2^2 \phi_4$.
Let $d$ be the order of $q$ modulo $\ell$, i.e. $\ell \mid \phi_d$. 
We continue to use the symbols we introduced in Section \ref{sectiong2}. 
Note that all Sylow $d$-tori are again regular and therefore also maximal tori. 

The character parametrizations described in Section \ref{sectioneq} also exist for $\GF$ and $N$. The relevant information about the centralizers of semisimple elements is displayed in Table \ref{tablessB}. Again, this can be used to prove the equivariance part of the inductive condition.

\begin{table}[tbh]
\begin{tabular}{cccccc}
\hline $d$ & Conjugacy Class & Type of $\mathbf{H}$& $|\mathbf{H}^F|$&$|s^N|$ &$\mathbf{W}_{\mathbf{H}}(\mathbf{S}_d)^F$ \\ \hline
$1$ & $A_1 =1 $ & $\mathsf{B}_2(q)$ & $q^4(q^2-1)(q^4-1)$ & $1$ & $D_{8}$ \\ 
 & $B_1(i,j) $ & $\mathsf{A}_0(q) \cdot  T(\phi_1^2)$ & $(q-1)^2$ & $8$ & $1$ \\
& $C_{1}(i) $ & $\mathsf{A}_1(q) \cdot T(\phi_1)$ & $q(q-1)^2(q+1)$ & $4$ & $C_2 $ \\
&$C_{2}(i) $ & $\mathsf{A}_1(q) \cdot T(\phi_1)$ & $q(q-1)^2(q+1)$ & $4$ & $C_2 $  \\ \hline 

$2$ & $A_1 =1 $ & $\mathsf{B}_2(q)$ & $q^4(q^2-1)(q^4-1)$ & $1$ & $D_{8}$ \\ 
 & $B_4(i,j) $ & $\mathsf{A}_0(q) \cdot  T(\phi_2^2)$ & $(q+1)^2$ & $8$ & $1$ \\
& $C_{3}(i) $ & $\mathsf{A}_1(q) \cdot T(\phi_2)$ & $q(q+1)^2(q-1)$ & $4$ & $C_2 $ \\
&$C_{4}(i) $ & $\mathsf{A}_1(q) \cdot T(\phi_2)$ & $q(q+1)^2(q-1)$ & $4$ & $C_2 $  \\ \hline 

$4$ & $A_1 =1 $ & $\mathsf{B}_2(q)$ & $q^4(q^2-1)(q^4-1)$ & $1$ & $C_{4}$ \\ 
 & $B_5(i) $ & $\mathsf{A}_0(q) \cdot T(\phi_4)$ & $(q^2+1)$ & $4$ & $1$ \\ \hline 

\end{tabular}
\caption{Conjugacy classes of semisimple elements $s \in \mathbf{S}_d^F$ using the notation of \cite[Table IV-1]{enomoto1972sp4} and description of the structure of their centralizers $\mathbf{H}=C_{\G}(s)$.}
\label{tablessB}
\end{table}

\begin{lemma} \label{lemmaeqB2}
There is a $D \times \mathcal{H}$-equivariant bijection $\Irr_{\ell'}(G) \rightarrow \Irr_{\ell'}(N)$. 
\end{lemma}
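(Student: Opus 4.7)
The plan is to mirror the proof of Proposition \ref{propeq}. I would first pass to the twisted setting via a Sylow $d$-twist $v = v_d \in V$, so that $\mathbf{T}_0^{vF} \cong \mathbf{S}_d^F$ and $\mathbf{N}^{vF} \cong N$, and check that the parametrizations $\psi_{\text{glob}}$ and $\psi_{\text{loc}}$ of Section \ref{sectioneq} carry over to this setting. It then suffices to show that each of them is $D \times \mathcal{H}$-equivariant.

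As a preliminary, I would establish a $\mathsf{B}_2$-analogue of Lemma \ref{lemmaextv}: there exists an $\mathbf{N}^{vF} D \times \mathcal{G}$-equivariant extension map $\Lambda$ for $\mathbf{T}_0^{vF} \triangleleft \mathbf{N}^{vF}$. For $d \in \{1,2\}$ this is covered by \cite[Corollary 4.4]{fry2020galoisequivariant}. For $d = 4$, I would imitate the construction in the proof of Lemma \ref{lemmaextv}: exploit the decomposition $H = H^{vF} \times H'$ furnished by the chosen twist $v_4$, extend trivially to $H$, and then invoke \cite[Proof of Lemma 4.1]{fry2020galoisequivariant} to pass to $\mathbf{N}^{vF}$.

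For the global parametrization, Galois-equivariance of the Jordan decomposition from \cite{srinivasanvinroot2019} reduces the task to checking that the $D$-equivariant bijection $\mathcal{I}$ between $\Irr_{\ell'}(\mathbf{W}_{C_{\G}(s)}(\mathbf{T}_0)^{vF})$ and $\mathcal{E}(C_{\G}(s)^{vF}, \mathbf{T}_0^{vF}, 1)$ is $\mathcal{H}$-equivariant for each class listed in Table \ref{tablessB}. For $d \in \{1,2\}$, the relative Weyl groups appearing are $D_8$, $C_2$, or trivial, all of which have rational character tables, and the unipotent $\ell'$-characters of the centralizer types $\mathsf{A}_1$ and $\mathsf{B}_2$ on the right-hand side are rational as well, so $\mathcal{H}$-equivariance is automatic. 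For $d = 4$, the only non-trivial relative Weyl group is $W_4 \cong C_4$, whose characters take values in $\mathbb{Q}(i)$, and the corresponding principal-series unipotent characters of $\mathsf{B}_2(q)$ lie in the same field. Since $\ell \mid \phi_4 = q^2 + 1$, Lemma \ref{lemmanumbertheory}(e) gives $\ell \equiv 1 \pmod{4}$, so $\mathcal{H}$ fixes $i$ and all relevant characters are $\mathcal{H}$-invariant.

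For the local parametrization, every $\eta \in \Irr_{\ell'}(\mathbf{W}_{C_{\G}(s)}(\mathbf{T}_0)^{vF})$ is $\mathcal{H}$-invariant by exactly the same case analysis of fields of values, and together with the equivariance of $\Lambda$ this yields the $D \times \mathcal{H}$-equivariance of $\psi_{\text{loc}}$. The main obstacle I anticipate is the $d = 4$ case, where the Weyl group characters are genuinely non-rational, but this is exactly the role played by part (e) of Lemma \ref{lemmanumbertheory}: the arithmetic condition $\ell \mid \phi_4$ forces $\ell \equiv 1 \pmod 4$ and thereby $\mathcal{H}$-invariance of the values in $\mathbb{Q}(i)$.
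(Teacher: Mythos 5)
Your proposal reaches the right conclusion and its final arithmetic step (invoking Lemma~\ref{lemmanumbertheory}(e) to force $\ell\equiv 1\pmod 4$ and hence $\mathcal{H}$-invariance of $\mathbb{Q}(\zeta_4)$-valued characters) is exactly the argument the paper uses. However, you miss the simplification that drives the paper's much shorter proof: in characteristic $2$ one has $-1=1$, so the subgroup $H=V\cap\mathbf{T}_0$ is \emph{trivial} (cf. \cite[Setting 2.1]{spaeth2009mckayexceptional}). Consequently the extension map for $H\triangleleft V$ can simply be taken to be the trivial one, and the whole preliminary lemma you propose to prove---imitating the $H=H^{vF}\times H'$ decomposition from Lemma~\ref{lemmaextv}---is vacuous. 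That construction was needed for $\mathsf{G}_2(3^f)$ precisely because $q$ odd makes $H$ nontrivial; replaying it for $\mathsf{B}_2(2^f)$ is not wrong, but it obscures what is actually a one-line observation.

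One further inaccuracy worth flagging: you write that for $d=4$ the unipotent characters of $\mathsf{B}_2(q)$ appearing on the Lusztig-series side ``lie in the same field'' $\mathbb{Q}(i)$, suggesting possible irrationality. In fact all unipotent characters of groups of type $\mathsf{A}_1$ and $\mathsf{B}_2$ are rational, as the paper states; the field $\mathbb{Q}(\zeta_4)$ only enters through the characters of the relative Weyl group $W_4\cong C_4$. Since both sides of the bijection $\mathcal{I}$ are pointwise $\mathcal{H}$-invariant, any $D$-equivariant bijection is automatically $D\times\mathcal{H}$-equivariant, which is what is actually used.
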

\begin{proof}
Since $q$ is even, $H$ is trivial, see \cite[Setting 2.1]{spaeth2009mckayexceptional}. We can therefore choose the trivial extension map for $H \triangleleft V$ and use this to construct an $ND \times \mathcal{G}$-equivariant extension map for $\mathbf{S}_d^F \triangleleft N$ as before. The characters of the occurring relative Weyl groups are again rational or, if $d=4$, contained in $\mathbb{Q}(\zeta_4)$ where $\zeta_4$ is a fourth primitive root of unity. By Lemma \ref{lemmanumbertheory}(e), they are $\mathcal{H}$-invariant. This already shows that the parametrization of the local characters is $D \times \mathcal{G}$-equivariant.

For the global characters, we can argue as before since the unipotent characters of groups of type $\mathsf{A}_1$ and $\mathsf{B}_2$ are again rational. 
\end{proof}

We now restrict ourselves to the case that $f$ is odd and consider character extensions.

\begin{lemma} \label{lemmaBlocal}
Every $\psi \in \Irr_{\ell'}(N)$ has an $(\Aut(G)_R \times \mathcal{H})_\psi$-invariant extension to $N \rtimes (\Aut(G)_R)_\psi$.
\end{lemma}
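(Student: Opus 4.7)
The plan is to mirror the structure of Lemmas~\ref{lemmad1}, \ref{lemmad2}, and \ref{lemmad36}, adapting the computations to the Weyl groups that occur in type $\mathsf{B}_2$. Let $\psi$ correspond to $(s,\eta)\in\mathcal{M}$ under the parametrization $\psi_{\text{loc}}$. If $s\neq 1$, then Table~\ref{tablessB} shows that $\mathbf{W}_{C_{\G}(s)}(\mathbf{T}_0)^{vF}$ is either trivial or $C_2$, so $\eta$ is linear and rational. Since $\chi^{\mathbf{T}_0}_{s,1}$ is also linear, the character $\Lambda(\chi^{\mathbf{T}_0}_{s,1})\eta$ is linear, and the Clifford-theoretic argument from the proof of Lemma~\ref{lemmad1} applies verbatim. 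If $s=1$ and $d=4$, then $W_4\cong C_4$ is cyclic, so all its characters are linear with values in $\mathbb{Q}(\zeta_4)$, which is $\mathcal{H}$-stable by Lemma~\ref{lemmanumbertheory}(e); a trivial extension then works as in the proof of Lemma~\ref{lemmad36}.

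The remaining case $s=1$, $d\in\{1,2\}$ with $W_d\cong D_8$ is where the actual work lies. Four of the five irreducible characters of $D_8$ are linear and rational, so they extend trivially; only the unique non-linear character $\eta_5\in\Irr(D_8)$ of degree $2$ needs attention. Since $\gamma^2=F_2$ acts trivially on $W_1$ (and on $W_1^{vF}$), $\eta_5$ first extends trivially to an $\mathcal{H}$-invariant character of $W_d\rtimes\langle\gamma^2\rangle=W_d\times\langle\gamma^2\rangle$. To handle the order-$2$ automorphism $\gamma^f$ (recall $f$ is odd), I would determine the action of $\gamma^f$ on $W_d$ and, in the $d=2$ case, construct the twisted action $\varphi_2$ of $(\gamma_2)^f$ via an element $x\in V$ satisfying $x\gamma^f(x)=v$ exactly as in the proof of Lemma~\ref{lemmad2}. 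The extensions of $\eta_5$ to $W_1\rtimes\langle\varphi\rangle$ and to $W_1\rtimes\langle\varphi_2\rangle$ can then be written down explicitly (either by hand, as a short $2\times 2$ matrix calculation, or with GAP). By analogy with the $\mathsf{G}_2$ picture, the resulting character values lie in $\mathbb{Q}(\sqrt{2})$ for $d=1$ and in $\mathbb{Q}(\sqrt{-2})$ for $d=2$.

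The $d=1$ case is then closed by Lemma~\ref{lemmanumbertheory}(d). For $d=2$ the cleanest route is a short number-theoretic remark: if $\ell\mid 2^f+1$ with $f$ odd, then $\ord_\ell(2)=2d'$ with $d'$ an odd divisor of $f$, so $2^{d'}\equiv -1\pmod\ell$ and a quick computation gives $\bigl(\tfrac{2}{\ell}\bigr)\bigl(\tfrac{-1}{\ell}\bigr)=\bigl(\tfrac{-2}{\ell}\bigr)=1$, hence $\ell\equiv 1,3\pmod 8$; this is precisely the condition ensuring that every Galois automorphism $\zeta_n\mapsto\zeta_n^{\ell^k}$ fixes $\sqrt{-2}=\zeta_8-\zeta_8^{-1}$. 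Combining the extension from $\langle\gamma^2\rangle$ with the one from $\langle\gamma^f\rangle$ as in the proof of Lemma~\ref{lemmad1} yields a unique $\mathcal{H}$-invariant common extension to $W_d\rtimes\langle\gamma\rangle$, which after inflation and canonical extension to the inner automorphisms in $\Aut(G)_R$ produces the desired $(\Aut(G)_R\times\mathcal{H})_\psi$-invariant extension. The main obstacle I expect is simply the bookkeeping for the twisted coset representative defining $\varphi_2$ together with the small quadratic-reciprocity lemma handling $\sqrt{-2}$, which is not listed among parts (a)--(e) of Lemma~\ref{lemmanumbertheory} and therefore needs to be stated on the fly.
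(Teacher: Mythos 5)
The paper's own proof is considerably shorter and takes a different route on the one genuinely new point, the quadratic field arising in the $d=2$ case. You correctly identify that the linear characters are handled as in Section~\ref{sectionext} and that the only work lies in the degree-$2$ character $\eta_5$ of $W_d \cong D_8$. Where you diverge is in the claim that the extension values for $d=2$ lie in $\mathbb{Q}(\sqrt{-2})$ and in your proposal to prove on the fly a new number-theoretic lemma for $\sqrt{-2}$. The paper sidesteps both: it observes that for $d=1$ the extension to $W_1\rtimes\langle\varphi\rangle$ (with $\varphi$ induced by $\gamma$) has values in $\mathbb{Q}(\sqrt 2)$, and then checks \emph{computationally} that the same field $\mathbb{Q}(\sqrt 2)$ appears for every order-$2$ conjugate of $\varphi$ in $\Aut(W_1)$ --- which covers the twisted version $\varphi_2$ that occurs when $d=2$. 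Because $\Aut(D_8) \cong D_8$, the automorphism $\varphi$ and its twist $\varphi_2 = (s_\alpha s_\beta s_\alpha)\varphi$ are conjugate in $\Aut(W_1)$, so the same quadratic field is forced, and Lemma~\ref{lemmanumbertheory}(d) alone suffices. In the $\mathsf{G}_2$ case the two automorphisms are genuinely non-conjugate (one gives $\mathbb{Q}(\sqrt 3)$, the other $\mathbb{Q}(\sqrt{-3})$), which is why separate lemmas (a) and (b) were needed there; the $D_8$ situation is simpler.

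That said, your argument is also correct as written: the $\mathbb{Q}(\sqrt{-2})$ computation is not wrong per se if you set up the twist so that $\varphi_2^2 = v$ in $W_1$ (this is the same ambiguity that makes the paper check ``every conjugate of order $2$''), and your small reciprocity calculation for $\ell \mid 2^f+1$ with $f$ odd giving $\ell \equiv 1,3 \pmod 8$ and fixing $\sqrt{-2}=\zeta_8 - \zeta_8^{-1}$ is sound. So you have a valid alternative route; the paper's approach just avoids the extra lemma by exploiting the conjugacy structure in $\Aut(D_8)$. One small thing to double-check in your write-up: whether your chosen coset representative genuinely leads to $\sqrt{-2}$ rather than $\sqrt 2$ again, since the paper's computational check suggests that all the order-$2$ lifts in fact produce $\sqrt 2$; if so, your extra lemma is unnecessary and only Lemma~\ref{lemmanumbertheory}(d) is needed.
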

\begin{proof}
If $\psi$ is a linear character, we can argue as in Section \ref{sectionext}. Otherwise, we have $d =1$ or $d=2$ and $\psi$ corresponds to the pair $(1,\eta)$ for $\eta \in \Irr(\mathbf{W}_d)$ of degree $2$. If $d=1$, the extension of $\eta$ to $W_1 \rtimes \langle \varphi \rangle$ where $\varphi \in \Aut(W_1)$ is induced by $\gamma$ is contained in $\mathbb{Q}(\sqrt{2})$. We further see computationally that this is the case for every conjugate of $\varphi$ that has order $2$. Using Lemma \ref{lemmanumbertheory}(d) and arguing as before, this shows the claim.
\end{proof}

\begin{lemma} \label{lemmaBglobal}
Every $\chi \in \Irr_{\ell'}(G)$ has a $(D \times \mathcal{H})_\chi$-invariant extension to $G \rtimes D_\chi$.
\end{lemma}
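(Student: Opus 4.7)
The plan is to mirror the proof of Lemma \ref{lemmaglobal} as closely as possible, splitting $\Irr_{\ell'}(G)$ into three families according to the method of extension. First, for any real $\chi$ of odd degree, the unique real-valued extension to $G \rtimes D_\chi$ supplied by \cite[Theorem 2.3]{navarrotiep2008rational} is automatically $(D \times \mathcal{H})_\chi$-invariant, since both groups act on the (singleton) set of real extensions of $\chi$. Second, for a semisimple or regular $\chi$, I would invoke \cite[Proposition 4.13]{johanssondiss} essentially verbatim: the construction via Gelfand--Graev characters and Deligne--Lusztig induction for disconnected reductive groups produces a $(\langle F_2 \rangle \times \mathcal{H})_\chi$-invariant extension, and, just as in the $\mathsf{G}_2$ proof, the same argument simultaneously controls the action of $\gamma$ and hence yields the full $(D \times \mathcal{H})_\chi$-invariance.

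The remaining $\ell'$-characters are unipotent. I would list them from Enomoto's generic character table \cite{enomoto1972sp4}, and each such $\chi$ is $D$-invariant by \cite[Theorem 2.5]{malle2008extuni}. For each $\chi$ I would build two partial extensions and glue them. The first, $\widehat{\chi} \in \Irr(G \rtimes \langle \gamma^2 \rangle) = \Irr(G \rtimes \langle F_2 \rangle)$, comes from identifying $\chi$ as a constituent of a suitable multiplicity-free Deligne--Lusztig character such as $R_{\mathbf{S}_4}^{\G}(1)$ (together with $R_{\mathbf{S}_1}^{\G}(1)$ or $R_{\mathbf{S}_2}^{\G}(1)$ if needed to cover all remaining unipotents) and then running the argument of \cite[Proposition 6.12]{johanssondiss}; this is admissible because $\gamma^2 = F_2$ acts trivially on the relative Weyl groups listed in Table \ref{tablessB} and $\G$ has connected center, and the resulting extension is $\mathcal{H}$-invariant. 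The second, $\widetilde{\chi} \in \Irr(G \rtimes \langle \gamma^f \rangle)$, is the extension to the graph-automorphism component, whose values on the non-identity coset I would compute directly and verify to lie in $\mathbb{Q}(\sqrt{2})$ or $\mathbb{Q}(i)$, so that Lemma \ref{lemmanumbertheory}(d),(e) delivers $\mathcal{H}$-invariance. Since $D = \langle \gamma^2 \rangle \times \langle \gamma^f \rangle$ when $f$ is odd, the unique common extension of $\widehat{\chi}$ and $\widetilde{\chi}$ to $G \rtimes D_\chi$ is the desired $(D \times \mathcal{H})_\chi$-invariant character.

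The main obstacle is the explicit calculation of $\widetilde{\chi}$ on the graph-automorphism coset. For $\mathsf{G}_2$ the author imports Brunat's tables \cite{brunat2007extensiong2graph} for precisely this step, and I am not aware of a single reference performing the analogous computation for the Suzuki-type twist of $\mathsf{B}_2(2^f)$. Parts of the needed information can be extracted from the Malle--Späth analysis of $\ell = 2$ in \cite{mallespaeth2015mckayfor2} and from the Suzuki-group literature, but it is likely necessary to redo the calculation directly for the handful of non-linear unipotent characters of $\mathsf{B}_2(q)$, verifying the field-of-values condition in each case. Because the list is short and the centraliser and torus data are collected in Table \ref{tablessB}, the computation should be tractable; but it constitutes the essential new input of the lemma beyond what the $\mathsf{G}_2$ template already supplies.
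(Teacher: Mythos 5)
Your skeleton matches the paper's — the same three families (real odd-degree, semisimple/regular, unipotent), the same gluing of an extension over $\langle F_2\rangle = \langle\gamma^2\rangle$ with one over $\langle\gamma^f\rangle$ — but two points in the unipotent case need correcting.

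First, it is \emph{not} true in type $\mathsf{B}_2$ that every unipotent $\chi \in \Irr_{\ell'}(G)$ is $D$-invariant. Unlike the $\mathsf{G}_2$ situation you are mirroring, the exceptional graph automorphism $\gamma$ genuinely interchanges a pair of unipotent characters of $\mathsf{B}_2(q)$. For that pair one has $D_\chi = \langle\gamma^2\rangle = \langle F_2\rangle$, so $G \rtimes D_\chi = G \rtimes \langle F_2\rangle$ and the extension $\widehat{\chi}$ you have already built there is the whole story; no gluing with a $\langle\gamma^f\rangle$-extension is needed or possible. The paper exploits exactly this to cut the graph-side computation down to the two $\gamma$-fixed unipotent characters $\theta_1$ and $\theta_5$. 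Second, the ``obstacle'' you flag as the essential new input — computing the extensions of the $\gamma$-fixed unipotent characters to $G\rtimes\langle\gamma^f\rangle$ and controlling their field of values — is in fact already available: Brunat carried out precisely this computation for $\mathsf{B}_2(2^f)\rtimes\langle\gamma^f\rangle$ with $f$ odd in his work on extending characters of Suzuki groups, \cite{brunat2006extSuz}, which is the reference the paper cites in place of \cite{brunat2007extensiong2graph}. From that source the extensions of $\theta_1$ and $\theta_5$ turn out to be \emph{rational} (not merely contained in $\mathbb{Q}(\sqrt{2})$ or $\mathbb{Q}(i)$ as you guess), which together with the rationality of all unipotent characters of $\mathsf{B}_2(q)$ makes the $(D\times\mathcal{H})_\chi$-invariance of the glued extension immediate. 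With these two fixes your argument becomes the paper's proof.
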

\begin{proof}
The generic character table of $G$ is again contained in \cite{chevie}. If $\chi$ is semisimple or regular, we can argue as in the proof of Lemma \ref{lemmaglobal} to find a suitable extension. Otherwise, $\chi$ is unipotent and again a constituent of a multiplicity-free Deligne--Lusztig character. As before, we find an $\mathcal{H}$-invariant extension of $\chi$ to $G \rtimes \langle F_3 \rangle_\chi$. 

The action of $\gamma$ interchanges two of these unipotent characters. Thus, we only have to consider the extensions of $\theta_1, \theta_5$ (in the notation of \cite{enomoto1972sp4}) to $G \rtimes \langle \gamma^f \rangle$. They have been computed in \cite{brunat2006extSuz} and we see that both are rational. Since all unipotent characters are rational, this already shows the claim.
\end{proof}

We can now prove Theorem \ref{theoremA} for type $\mathsf{B}_2$.
\begin{theorem}
The inductive McKay--Navarro condition holds for the group $\mathsf{B}_2(2^f)$ and all primes $\ell$ if $f$ is an odd integer.
\end{theorem}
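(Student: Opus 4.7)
The plan is to mirror the structure of the analogous theorem for $\mathsf{G}_2(3^f)$ line for line, since the $\mathsf{B}_2$-specific legwork has already been done in the preceding three lemmas. Since the outer automorphism group $D=\langle \gamma \rangle$ is cyclic, \cite[Lemma 4.6]{johanssondiss} reduces the inductive McKay--Navarro condition to the simplified Condition \ref{cond}, and $G$ coincides with its universal covering group because the Schur multiplier is trivial.

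First, I would dispose of the prime $\ell=2$ by citing \cite{ruhstorferSF2022}, where the full inductive McKay--Navarro condition is established at $\ell=2$ for every finite simple group. The edge case $f=1$ can be set aside: $\mathsf{B}_2(2)\cong \Sym_6$ is not simple, and the simple derived subgroup $\Alt_6$ is handled by the existing treatment of alternating groups; alternatively, one could check the universal cover with \cite{GAP4}, in the same spirit as the treatment of $3.\mathsf{G}_2(3)$ in the $\mathsf{G}_2$ theorem.

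What remains is the case $\ell\geq 3$ and $f\geq 3$ odd. Here the equivariance part of Condition \ref{cond} is immediate from Lemma \ref{lemmaeqB2}, taking $N=N_G(\mathbf{S}_d)$: this subgroup is $\Aut(G)_R$-stable by its intrinsic construction, contains a Sylow $\ell$-normalizer by \cite[Theorem 5.19]{malle2008height0}, and preserves central characters for free since $Z(\G)$ is trivial. The extension part is then obtained by combining Lemma \ref{lemmaBlocal} (local side) with Lemma \ref{lemmaBglobal} (global side); in each case the constructed $(\Aut(G)_R\times \mathcal{H})_\psi$-invariant extension to $N\rtimes D_\psi$, respectively $G\rtimes D_\chi$, is then propagated canonically to the remaining inner automorphisms in $\Aut(G)_R$, just as in the $\mathsf{G}_2$ proof.

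Honestly, the main obstacle is already behind us, not in the theorem itself: it lay in recognizing, inside Lemmas \ref{lemmaBlocal} and \ref{lemmaBglobal}, that the irrationalities entering the extensions of the non-linear $W_1$-characters live in $\mathbb{Q}(\sqrt{2})$ and so are fixed by $\mathcal{H}$ via Lemma \ref{lemmanumbertheory}(d), and that the unipotent characters of $\mathsf{B}_2$ and their extensions via multiplicity-free Deligne--Lusztig characters are all rational. Once those two inputs are in hand, the theorem at this level is a short bookkeeping step of the same shape as its $\mathsf{G}_2$ counterpart, so no new difficulty is expected in writing it out.
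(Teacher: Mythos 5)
Your overall structure for $f\geq 3$ odd and $\ell\geq 3$ matches the paper: take $N=N_G(\mathbf{S}_d)$, invoke Lemma~\ref{lemmaeqB2} for the equivariance part and Lemmas~\ref{lemmaBlocal} and~\ref{lemmaBglobal} for the extension part, then propagate to inner automorphisms. The citation of \cite{ruhstorferSF2022} for $\ell=2$ is acceptable, though the paper uses \cite{johansson2020} (defining characteristic), which is the more direct source here.

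The genuine gap is in your treatment of $f=1$. You propose to verify the simplified Condition~\ref{cond} for the universal cover of $\mathsf{B}_2(2)'\cong \Alt_6$ with GAP ``in the same spirit as the treatment of $3.\mathsf{G}_2(3)$,'' expecting it to go through. It does not: the paper's GAP computation shows that the extension part of Condition~\ref{cond} \emph{fails} for this group, because not all characters admit $(\Aut(G)_R\times\mathcal{H})_\psi$-invariant extensions. The case is rescued only because the simplified condition is strictly stronger than the actual inductive McKay--Navarro condition \cite[Definition 3.5]{navarro2019reduction}; the paper checks that the $\mathcal{H}$-action on local and global extensions is nevertheless equivariant in the sense required by the original definition, so the inductive condition still holds. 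Your proposal contains no mechanism to notice or repair this, so as written it would assert a false intermediate claim. Relatedly, your primary suggestion that $\Alt_6$ is ``handled by the existing treatment of alternating groups'' is not backed by a citation and is doubtful: $\Alt_6$ has exceptional Schur multiplier $C_6$ and is precisely the kind of small case that generic arguments tend not to cover, which is why the paper resorts to an explicit check.
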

\begin{proof}
For $\ell=2$ we already know that the statement holds by \cite{johansson2020}. The group $\mathsf{B}_2(2)'$ can be considered with \cite{GAP4} and we see that the extension part of Condition \ref{cond} is not always satisfied since we do not always have invariant character extensions. However, the action of $\mathcal{H}$ on the local and global character extensions is equivariant and thereby \cite[Definition 3.5]{navarro2019reduction} holds.

If $f \geq 2$, the equivariance condition is satisfied by Proposition \ref{lemmaeqB2}, and, if $ f$ is odd, the extension condition is satisfied by Lemma \ref{lemmaBlocal} and \ref{lemmaBglobal}.
\end{proof}

\subsection*{Acknowledgement} 
I would like to thank Gunter Malle for his suggestions and comments. This work was financially supported by the SFB-TRR 195 of the German Research Foundation (DFG).

\bibliography{biblio}
\bibliographystyle{amsalpha2} 
\end{document}